\documentclass[11pt,reqno]{amsart}

\usepackage[T1]{fontenc}
\usepackage{lmodern}
\usepackage{amsmath,amssymb}
\usepackage[hidelinks]{hyperref}

\hypersetup{
  pdftitle={Acoustic Filtering and Pressure Recovery for a Two-Parameter
    Hyperbolic Relaxation of the Incompressible Navier--Stokes Equations},
  pdfauthor={Leyang Wang and Wenlong Lin},
  pdfsubject={Analysis of partial differential equations},
  pdfkeywords={incompressible Navier--Stokes equations, hyperbolic relaxation,
    artificial compressibility, acoustic filtering, pressure recovery,
    singular perturbation}
}

\allowdisplaybreaks

\newtheorem{theorem}{Theorem}[section]
\newtheorem{proposition}[theorem]{Proposition}
\newtheorem{lemma}[theorem]{Lemma}
\newtheorem{corollary}[theorem]{Corollary}
\theoremstyle{definition}

\theoremstyle{remark}
\newtheorem{remark}[theorem]{Remark}
\numberwithin{equation}{section}

\newcommand{\T}{\mathbb T}
\newcommand{\R}{\mathbb R}
\newcommand{\eps}{\epsilon}
\newcommand{\dd}{\,\mathrm d}
\newcommand{\Div}{\operatorname{div}}
\newcommand{\Curl}{\operatorname{curl}}

\newcommand{\ip}[2]{\left\langle #1,#2\right\rangle}
\newcommand{\norm}[2]{\left\|#1\right\|_{#2}}

\begin{document}

\title[Acoustic filtering and pressure recovery]
{Acoustic Filtering and Pressure Recovery\\
for a Two-Parameter Hyperbolic Relaxation\\
of the Incompressible Navier--Stokes\\
Equations}

\author{Leyang Wang\textsuperscript{*}}
\address{Mathematics and Science College, Shanghai Normal University,
Shanghai 200234, China}
\email{1000446516@smail.shnu.edu.cn}
\thanks{\textsuperscript{*}Corresponding author: Leyang Wang.}

\author{Wenlong Lin}
\address{Mathematics and Science College, Shanghai Normal University,
Shanghai 200234, China}
\email{1000486631@smail.shnu.edu.cn}
\renewcommand{\shortauthors}{Leyang Wang and Wenlong Lin}

\date{August 31, 2026}

\subjclass[2020]{Primary 35Q30; Secondary 35L45, 35B40, 76D05}

\keywords{Incompressible Navier--Stokes equations, hyperbolic relaxation,
artificial compressibility, acoustic filtering, pressure recovery,
singular perturbation}

\begin{abstract}
We study a two-parameter first-order hyperbolic relaxation approximation
of the incompressible Navier--Stokes equations on the two-dimensional
torus.  Existing derivative-level large-perturbation estimates recover
the velocity but control the pressure only after multiplication by the
square root of the artificial-compressibility parameter.  We isolate the
corresponding acoustic oscillation by introducing a non-autonomous
acoustic--stress corrector.  Two compensated variables reveal a
physical-space cancellation which yields an integrated gradient estimate
for the velocity corrector.  The two-dimensional Ladyzhenskaya
inequality then gives a quadratic bound for its self-interaction.  After
the corrector is removed, the remaining nonlinear error has zero initial
data and is controlled at the next order.  If the relaxation parameters
$\eps$ and $\delta$ satisfy
$\delta^2\ll\eps\leq\mu_*\delta$, we prove strong recovery of the
filtered pressure in $L^\infty(0,T;L^2(\T^2))$.  Quantitatively, the
filtered pressure and velocity errors are bounded by
$C_T\delta/\sqrt{\eps}$ and $C_T\delta$, respectively.  The argument
also clarifies why strong convergence of the unfiltered pressure cannot
in general be expected for ill-prepared acoustic data.
\end{abstract}

\maketitle

\section{Introduction and main result}
\label{sec:introduction}

\subsection{The relaxation model}

Let $\T^2=\R^2/\mathbb Z^2$ and fix $T>0$.  We consider a smooth
solution $(p,u)$ of the incompressible Navier--Stokes equations
\begin{equation}
\label{eq:NS}
\left\{
\begin{aligned}
    \Div u&=0,\\
    \partial_tu+\Div(u\otimes u)+\nabla p&=\Delta u,\\
    u|_{t=0}&=u_0
\end{aligned}
\right.
\qquad\text{on }[0,T]\times\T^2.
\end{equation}
The viscosity has been normalized to one, and the pressure is normalized
by $\int_{\T^2}p(t,x)\,\dd x=0$.  Define the stress-like tensor
\begin{equation}
\label{eq:limit-stress}
    U:=u\otimes u-\nabla u.
\end{equation}
The momentum equation in \eqref{eq:NS} is then equivalently
\begin{equation}
\label{eq:limit-momentum-stress}
    \partial_tu+\Div U+\nabla p=0.
\end{equation}

For two small parameters $\eps,\delta>0$, the approximation under
consideration is
\begin{equation}
\label{eq:relaxation-system}
\left\{
\begin{aligned}
    \eps\partial_t p^{\eps,\delta}
        +\Div u^{\eps,\delta}&=0,\\
    \partial_tu^{\eps,\delta}
        +\Div U^{\eps,\delta}
        +\nabla p^{\eps,\delta}&=0,\\
    \delta\partial_tU^{\eps,\delta}
        +\nabla u^{\eps,\delta}
        &=u^{\eps,\delta}\otimes u^{\eps,\delta}
          -U^{\eps,\delta}.
\end{aligned}
\right.
\end{equation}
Here $p^{\eps,\delta}$ is scalar-valued,
$u^{\eps,\delta}$ is $\R^2$-valued, and
$U^{\eps,\delta}$ is $\R^{2\times2}$-valued.  The first equation is an
artificial-compressibility equation, whereas the last equation relaxes
the tensor $U^{\eps,\delta}$ toward
$u^{\eps,\delta}\otimes u^{\eps,\delta}-\nabla u^{\eps,\delta}$.
Thus \eqref{eq:relaxation-system} formally reduces to
\eqref{eq:NS} as $(\eps,\delta)\to(0,0)$.

Artificial-compressibility approximations go back to Chorin and Temam
\cite{Chorin1967,Chorin1968,Temam1969a,Temam1969b}.  Hyperbolic
relaxations of viscous fluid equations have subsequently been studied
from several viewpoints; see, for example,
\cite{BrenierNataliniPuel2004,PaicuRaugel2007,RackeSaal2012,
Hachicha2014,IlyinRykovZelik2018}.  General singular-limit and
relaxation frameworks can be found in
\cite{JinXin1995,KlainermanMajda1981,Yong1999,Yong2001}, while
kinetic relaxation limits are represented, for instance, by
\cite{Bianchini2019}.  Analytical results for artificial
compressibility include
\cite{DonatelliMarcati2010,Kagei2021,Temam2001}.  The combined
two-parameter system \eqref{eq:relaxation-system} was introduced and
analyzed in \cite{HuangRohdeYongZhang2025}.  The more recent work
\cite{HuangRohdeZhang2026} proves robust velocity estimates in a
derivative-level large-perturbation regime.  In two dimensions, those
estimates allow the derivatives of the initial velocity discrepancy to
remain of order one, or even to grow with the singular parameters.

The pressure behaves differently.  The standard symmetric-hyperbolic
energy controls
$\sqrt{\eps}(p^{\eps,\delta}-p)$, but it does not provide an
unweighted $L^2$ estimate.  Under ill-prepared data this loss is natural:
the artificial-compressibility equation supports fast acoustic
oscillations.  The purpose of this paper is to separate those
oscillations and to recover the remaining, filtered pressure.

\subsection{Initial assumptions}
\label{subsec:initial-assumptions}

Write
\[
    p_0^{\eps,\delta}=p^{\eps,\delta}(0),\qquad
    u_0^{\eps,\delta}=u^{\eps,\delta}(0),\qquad
    U_0^{\eps,\delta}=U^{\eps,\delta}(0),
\]
and let $p_0=p(0)$ and $U_0=U(0)$.  We impose the same
two-dimensional derivative-level assumptions as in the baseline
large-perturbation theorem of \cite{HuangRohdeZhang2026}.  Namely, for
some $C_0\geq1$ independent of $\eps$ and $\delta$,
\begin{equation}
\label{eq:main-initial-velocity-L2}
    \norm{u_0^{\eps,\delta}-u_0}{L^2}^2\leq C_0\delta,
\end{equation}
\begin{equation}
\label{eq:main-initial-curl-div}
\begin{aligned}
    \norm{\Curl u_0^{\eps,\delta}}{L^2}^2
      +\delta\norm{\Curl u_0^{\eps,\delta}}{H^1}^2
        &\leq C_0,\\
    \norm{\Div u_0^{\eps,\delta}}{L^2}^2
      +\delta\norm{\Div u_0^{\eps,\delta}}{H^1}^2
        &\leq C_0\frac{\eps}{\delta}.
\end{aligned}
\end{equation}
The initial pressure and stress satisfy
\begin{equation}
\label{eq:main-initial-pressure}
    \eps\norm{p_0^{\eps,\delta}}{L^2}^2
    +\delta^2\norm{p_0^{\eps,\delta}}{H^1}^2
    +\delta^3\norm{p_0^{\eps,\delta}}{H^2}^2
    \leq C_0\delta
\end{equation}
and
\begin{equation}
\label{eq:main-initial-stress}
    \norm{U_0^{\eps,\delta}}{L^2}^2
    +\delta\norm{U_0^{\eps,\delta}}{H^1}^2
    +\delta^2\norm{\Div U_0^{\eps,\delta}}{H^1}^2
    \leq C_0.
\end{equation}
All norms in this paper are over $\T^2$ unless explicitly stated
otherwise.  Since $(p,u,U)$ is fixed and smooth, the same assumptions
remain valid, after changing $C_0$, when
$p_0^{\eps,\delta}$ and $U_0^{\eps,\delta}$ are replaced by their
differences from $p_0$ and $U_0$.

\subsection{The acoustic--stress corrector}

For a vector field $z$, set
\begin{equation}
\label{eq:Bu-definition}
    B_u(z):=u\otimes z+z\otimes u.
\end{equation}
We define the corrector $(\pi,a,S)$ as the solution of
\begin{equation}
\label{eq:corrector-introduction}
\left\{
\begin{aligned}
    \eps\partial_t\pi+\Div a&=-\eps\partial_tp,\\
    \partial_ta+\Div S+\nabla\pi&=0,\\
    \delta\partial_tS+\nabla a
        &=B_u(a)-S-\delta\partial_tU
\end{aligned}
\right.
\end{equation}
with initial data
\begin{equation}
\label{eq:main-corrector-data}
    \pi(0)=p_0^{\eps,\delta}-p_0,\qquad
    a(0)=u_0^{\eps,\delta}-u_0,\qquad
    S(0)=U_0^{\eps,\delta}-U_0.
\end{equation}
The principal part of \eqref{eq:corrector-introduction} is a linear
symmetric-hyperbolic system and its lower-order coefficients are smooth.
Standard symmetric-hyperbolic theory \cite{Kato1975} therefore gives a
unique smooth corrector on $[0,T]$ for smooth initial data.  The variables
\begin{equation}
\label{eq:main-compensated-variables}
    X:=\delta\Div S+\delta\nabla\pi-a,\qquad
    Y:=\sqrt{\eps}\,\pi
       -\frac{\delta}{\sqrt{\eps}}\Div a
\end{equation}
will expose the hidden relaxation dissipation.

\subsection{Main theorem}

\begin{theorem}[Filtered pressure recovery]
\label{thm:main-filtered-pressure}
Let $(p,u,U)$ be a smooth solution of
\eqref{eq:NS}--\eqref{eq:limit-stress} on $[0,T]\times\T^2$, and let
the smooth initial data for \eqref{eq:relaxation-system} satisfy
\eqref{eq:main-initial-velocity-L2}--\eqref{eq:main-initial-stress}.
There exists a constant $\mu_*>0$, depending only on $C_0$, $T$, and
the smooth limit solution, such that whenever
\begin{equation}
\label{eq:main-upper-parameter-condition}
    0<\eps\leq\mu_*\delta
\end{equation}
and the parameters are sufficiently small, the corresponding relaxation
solution exists on $[0,T]$.

Moreover, the corrector defined by
\eqref{eq:corrector-introduction}--\eqref{eq:main-corrector-data}
satisfies
\begin{equation}
\label{eq:main-corrector-estimate}
\begin{aligned}
 &\sup_{0\leq t\leq T}
 \left[
 \begin{aligned}
   &\eps\norm{\pi(t)}{L^2}^2+\norm{a(t)}{L^2}^2
     +\delta\norm{S(t)}{L^2}^2\\
   &\quad+\norm{X(t)}{L^2}^2+\norm{Y(t)}{L^2}^2
     +\delta\norm{\nabla a(t)}{L^2}^2
 \end{aligned}
 \right]\\
 &\qquad
 +\int_0^T
 \bigl(\norm{S(t)}{L^2}^2+\norm{\nabla a(t)}{L^2}^2\bigr)\,\dd t
 \leq C_T\delta.
\end{aligned}
\end{equation}
In addition,
\begin{equation}
\label{eq:main-quadratic-corrector}
    \int_0^T\norm{a(t)\otimes a(t)}{L^2}^2\,\dd t
    \leq C_T\delta^2.
\end{equation}

Define the filtered remainder by
\begin{equation}
\label{eq:main-remainder-definition}
    r=p^{\eps,\delta}-p-\pi,\qquad
    w=u^{\eps,\delta}-u-a,\qquad
    Z=U^{\eps,\delta}-U-S.
\end{equation}
Then
\begin{equation}
\label{eq:main-remainder-estimate}
\begin{aligned}
 &\sup_{0\leq t\leq T}
 \left(
    \eps\norm{r(t)}{L^2}^2
    \norm{w(t)}{L^2}^2
    \delta\norm{Z(t)}{L^2}^2
 \right)
 +\int_0^T\norm{Z(t)}{L^2}^2\,\dd t
 \leq C_T\delta^2.
\end{aligned}
\end{equation}
Consequently,
\begin{equation}
\label{eq:main-filtered-rates}
\begin{aligned}
    \norm{p^{\eps,\delta}-\pi-p}
         {L^\infty(0,T;L^2)}
      &\leq C_T\frac{\delta}{\sqrt{\eps}},\\
    \norm{u^{\eps,\delta}-a-u}
         {L^\infty(0,T;L^2)}
      &\leq C_T\delta.
\end{aligned}
\end{equation}
If, in addition,
\begin{equation}
\label{eq:main-lower-parameter-condition}
    \frac{\delta^2}{\eps}\longrightarrow0,
\end{equation}
then
\begin{equation}
\label{eq:main-pressure-convergence}
    p^{\eps,\delta}-\pi\longrightarrow p
    \quad\text{in }L^\infty(0,T;L^2(\T^2)).
\end{equation}
\end{theorem}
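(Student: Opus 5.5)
The plan has three parts, in this order: estimate the corrector $(\pi,a,S)$, which solves a linear system; bound its self-interaction $a\otimes a$; then run a relative-energy argument for the remainder $(r,w,Z)$, which starts from zero data.

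\textbf{Corrector estimates.} First I would take the $L^2$ inner products of the three equations in \eqref{eq:corrector-introduction} with $\pi$, $a$ and $S$. The principal part is symmetric, so the flux terms $\Div a\,\pi$, $\nabla\pi\cdot a$, $\Div S\cdot a$ and $\nabla a:S$ cancel after integration by parts. This leaves
\[
\tfrac12\tfrac{d}{dt}\bigl(\eps\|\pi\|^2+\|a\|^2+\delta\|S\|^2\bigr)+\|S\|^2=\langle B_u(a),S\rangle-\eps\langle\partial_tp,\pi\rangle-\delta\langle\partial_tU,S\rangle .
\]
Each term on the right is absorbed by Young and Gronwall. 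The forcing terms cost $O(\eps+\delta^2)$, so the initial size $O(\delta)$ coming from \eqref{eq:main-initial-velocity-L2}, \eqref{eq:main-initial-pressure} and \eqref{eq:main-initial-stress} is preserved.

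Next I would derive the gradient estimate from the compensated variables \eqref{eq:main-compensated-variables}. Using the $S$-equation, $\delta\Div S=\delta\partial_t\,$-terms$\,-\Delta a+\Div B_u(a)-\Div S$, which gives the identity
\[
X=-\delta\partial_t a-a+\delta\nabla\pi+\dots\, ,
\]
and hence $S\approx-\nabla a+B_u(a)-\delta\partial_t(S+U)$. Testing the $a$-equation with $-\Delta a$, or equivalently computing $\tfrac{d}{dt}\bigl(\|X\|^2+\|Y\|^2\bigr)$, should produce the cancellation $\langle\nabla\pi,\nabla\Div a\rangle$ versus $\langle\Div a,\Delta\pi\rangle$. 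The outcome I aim for is
\[
\tfrac{d}{dt}\bigl(\|X\|^2+\|Y\|^2+\delta\|\nabla a\|^2\bigr)+c\|\nabla a\|^2\lesssim\text{lower order}+\|S\|^2 .
\]
The initial values of $X$, $Y$ and $\sqrt\delta\nabla a$ are $O(\sqrt\delta)$ by the $H^1$ and $H^2$ parts of the assumptions. The restriction $\eps\le\mu_*\delta$ enters at this point, through the factor $\delta/\sqrt\eps$ in $Y$ paired with $\|\Div a_0\|^2\le C_0\eps/\delta$: it keeps the acoustic coupling terms absorbable. This establishes \eqref{eq:main-corrector-estimate}.

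\textbf{Quadratic bound and remainder.} The bound \eqref{eq:main-quadratic-corrector} follows from Ladyzhenskaya's inequality:
\[
\|a\otimes a\|_{L^2}^2\le\|a\|_{L^4}^4\lesssim\|a\|^2\bigl(\|a\|^2+\|\nabla a\|^2\bigr),
\]
then integrate in time and use $\sup\|a\|^2\le C\delta$ together with $\int\|\nabla a\|^2\le C\delta$.

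For the remainder, subtract \eqref{eq:NS}--\eqref{eq:limit-momentum-stress} and \eqref{eq:corrector-introduction} from \eqref{eq:relaxation-system}. The triple $(r,w,Z)$ then solves the same symmetric operator with zero initial data, and its stress equation has source
\[
B_u(w)+(w+a)\otimes(w+a)-Z .
\]
The forcing terms $-\eps\partial_tp$ and $-\delta\partial_tU$ were placed in the corrector precisely so that no source of size $\sqrt{\delta}$ remains. The energy identity gives
\[
\tfrac{d}{dt}E+\|Z\|^2\lesssim\|w\|^2+\|a\otimes a\|^2+\|w\otimes w\|^2+\|a\otimes w\|^2,
\qquad E=\eps\|r\|^2+\|w\|^2+\delta\|Z\|^2 .
\]
The cross term $a\otimes w$ is handled with Ladyzhenskaya and $\int\|\nabla a\|^2$, and $\|a\otimes a\|^2$ integrates to $C\delta^2$. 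The term $w\otimes w$ requires control of $\nabla w$. I would obtain it either from a compensated estimate for the remainder analogous to the one above, or from the a priori bounds of \cite{HuangRohdeZhang2026}, combined with a continuation (bootstrap) argument under the smallness of the parameters; that same continuation yields existence on $[0,T]$. Gronwall then gives \eqref{eq:main-remainder-estimate}.

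The rates \eqref{eq:main-filtered-rates} follow by dividing by $\eps$ in the pressure term. The convergence \eqref{eq:main-pressure-convergence} follows under \eqref{eq:main-lower-parameter-condition}.

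\textbf{Main obstacle.} The hardest step is the gradient estimate for $a$ via $X$ and $Y$: finding the right combination so that the terms $\delta\partial_t S$ and $\pi$ cancel in physical space with constants uniform in $\eps/\delta$. I would first try computing $\tfrac{d}{dt}\|X\|^2$ and $\tfrac{d}{dt}\|Y\|^2$ directly from the system and look for the sign-definite $\|\nabla a\|^2$ term.
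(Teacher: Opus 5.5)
Your architecture is the paper's: basic corrector energy, a compensated $X$--$Y$ estimate giving $\int_0^T\norm{\nabla a}{L^2}^2\,\dd t\le C_T\delta$, Ladyzhenskaya for $a\otimes a$, and a zero-data energy estimate for $(r,w,Z)$. The gap is in the step you flag as the main obstacle, which is the paper's central point. You do not carry it out, and the identity you propose for it, $X=-\delta\partial_ta-a+\delta\nabla\pi+\dots$ derived from the stress equation, is wrong.

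The missing idea is the corrector momentum equation. It gives $\Div S+\nabla\pi=-\partial_ta$, so \emph{exactly} $X=-a-\delta\partial_ta$. Next, take the divergence of the stress equation and use the $\pi$- and $a$-equations together with $\Div\partial_tU+\nabla\partial_tp=-\partial_{tt}u$. This yields
\[
 \partial_tX+\Delta a-\tfrac{1}{\sqrt{\eps}}\nabla Y=\Div B_u(a)+\delta\partial_{tt}u,
 \qquad
 \partial_tY-\tfrac{1}{\sqrt{\eps}}\Div X=-\sqrt{\eps}\,\partial_tp .
\]
Test by $X$ and $Y$. The $\eps^{-1/2}$ terms form a skew pair and cancel. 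The exact identity turns $\ip{\Delta a}{X}$ into $\norm{\nabla a}{L^2}^2+\frac{\delta}{2}\frac{\dd}{\dd t}\norm{\nabla a}{L^2}^2$. This is where both the dissipation and the $\delta\norm{\nabla a}{L^2}^2$ in your functional come from.

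Your formula would instead leave $\delta\ip{\Delta a}{\nabla\pi}$. Testing the $a$-equation by $-\Delta a$ leaves $\ip{\nabla\pi}{\Delta a}$ and $\ip{\Div S}{\Delta a}$. None of these is controlled by the bounds at hand, which cover only $\sqrt{\eps}\,\pi$, $\sqrt{\delta}\,S$ and $\int_0^T\norm{S}{L^2}^2\,\dd t$. So these two routes are not "equivalent", and neither closes.

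Once the identity is in place, the convection term is harmless. Since $\norm{\Div B_u(a)}{L^2}\le C_T(\norm{a}{L^2}+\norm{\nabla a}{L^2})$, Young's inequality gives $|\ip{\Div B_u(a)}{X}|\le\frac14\norm{\nabla a}{L^2}^2+C_T(\norm{a}{L^2}^2+\norm{X}{L^2}^2)$. The paper instead integrates by parts in time and adds $-\delta\ip{B_u(a)}{\nabla a}$ to the functional. Adding a multiple of the basic energy then closes the Gronwall argument.

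Two smaller corrections. First, the restriction $\eps\le\mu_*\delta$ is not what controls $Y_0$. The factor $\delta^2/\eps$ cancels against $\norm{\Div a_0}{L^2}^2\le C_0\eps/\delta$ for every $\eps$. The restriction actually enters in three places: the baseline theorem of \cite{HuangRohdeZhang2026}, the forcing size $\eps+\delta^2\le C\delta$, and the bound $\delta\norm{\Div a_0}{L^2}^2\le C\eps\le C\delta$.

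Second, for $w\otimes w$ your second option is the one that works, and it needs neither a bootstrap nor a compensated remainder estimate. The baseline curl/div bounds, with the Hodge identity and $\eps/\delta\le\mu_*$, give $\sup_t\norm{v}{H^1}\le C_T$. The corrector estimate gives $\sup_t\norm{a}{H^1}\le C_T$, hence $\sup_t\norm{w}{H^1}\le C_T$. Therefore $\norm{w}{L^4}^4\le C_T\norm{w}{L^2}^2$ and $\norm{a\otimes w}{L^2}^2\le\frac12\norm{a}{L^4}^4+C_T\norm{w}{L^2}^2$. Existence on $[0,T]$ is likewise simply inherited from the baseline theorem.
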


\begin{remark}
\label{rem:main-large-perturbation}
The expression ``large perturbation'' refers to derivative-level
behavior.  Assumption \eqref{eq:main-initial-velocity-L2} still imposes
an $L^2$ discrepancy of order $\delta^{1/2}$, whereas
\eqref{eq:main-initial-curl-div} allows derivatives of the approximate
velocity to remain of order one or to grow as the parameters vanish.
\end{remark}

\begin{remark}
\label{rem:main-parameter-window}
The two parameter restrictions have different roles.  The upper bound
$\eps\leq\mu_*\delta$ is inherited from the uniform velocity theory.
The lower condition $\delta^2/\eps\to0$ converts the weighted estimate
$\eps\norm{r}{L^2}^2\leq C_T\delta^2$ into strong unweighted pressure
convergence.  Thus the pressure-recovery window is
\[
    \delta^2\ll\eps\leq\mu_*\delta.
\]
For example, if $\eps=c\delta^\alpha$ with fixed $c>0$, then
$1<\alpha<2$ is admissible; the endpoint $\alpha=1$ is also admissible
when $c\leq\mu_*$.  In this case the pressure rate is
$C_Tc^{-1/2}\delta^{1-\alpha/2}$.
\end{remark}

\begin{remark}
\label{rem:main-filtered-only}
Theorem~\ref{thm:main-filtered-pressure} does not claim strong
convergence of $p^{\eps,\delta}$ itself.  For ill-prepared data, $\pi$
may retain fast acoustic oscillations and need not converge strongly to
zero in $L^\infty(0,T;L^2)$.  If an additional preparation condition
gives $\pi\to0$ in that space, then
\eqref{eq:main-pressure-convergence} immediately yields convergence of
the unfiltered pressure.
\end{remark}

\subsection{Proof strategy}

The proof has three layers.  First, we use the two-dimensional theorem
of \cite{HuangRohdeZhang2026}.  Its proof supplies the weighted error
estimate needed here.  Second, the basic corrector energy is
supplemented by the variables $X$ and $Y$.
Their equations contain singular acoustic terms of size
$\eps^{-1/2}$, but these terms cancel exactly in physical space.  After
one time integration by parts in the linearized convection term, the
resulting compensated functional controls
$\int_0^T\norm{\nabla a}{L^2}^2\,\dd t$.  Finally, the filtered
remainder has zero initial data.  Its only source independent of the
remainder is $a\otimes a$, whose squared $L^2$ norm is integrable with
size $O(\delta^2)$.  A closed energy estimate then proves
\eqref{eq:main-remainder-estimate}.

\section{Preliminaries and baseline estimates}
\label{sec:preliminaries}

Throughout the paper, $C$ denotes a positive constant independent of
$\eps$ and $\delta$.  The notation $C_T$ permits dependence on $T$,
$C_0$, and finitely many smooth norms of $(p,u,U)$.  Constants may
change from line to line.

\subsection{Notation and two-dimensional inequalities}

For $z=(z_1,z_2)$, our conventions are
\[
    (\nabla z)_{ij}=\partial_jz_i,\qquad
    \Div z=\partial_1z_1+\partial_2z_2,\qquad
    \Curl z=\partial_1z_2-\partial_2z_1.
\]
For a matrix field $A=(A_{ij})$, the divergence is taken row-wise:
$(\Div A)_i=\sum_j\partial_jA_{ij}$.  The Frobenius product is
$A:B=\sum_{i,j}A_{ij}B_{ij}$, and $\ip{\cdot}{\cdot}$ denotes the
$L^2$ pairing.  Periodicity gives
\begin{equation}
\label{eq:periodic-integration-by-parts}
    \ip{\Div A}{z}=-\ip{A}{\nabla z},\qquad
    \ip{\nabla f}{z}=-\ip{f}{\Div z}.
\end{equation}
For vector fields $b,c$, $(b\otimes c)_{ij}=b_ic_j$, so that
\begin{equation}
\label{eq:tensor-divergence}
    \Div(b\otimes c)=(c\cdot\nabla)b+b\,\Div c.
\end{equation}
Since $u$ is smooth,
\begin{equation}
\label{eq:Bu-basic-estimates}
    \norm{B_u(z)}{L^2}\leq C_T\norm{z}{L^2},\qquad
    \norm{\Div B_u(z)}{L^2}
       \leq C_T\bigl(\norm{z}{L^2}+\norm{\nabla z}{L^2}\bigr).
\end{equation}

\begin{lemma}[Hodge identity]
\label{lem:Hodge-identity}
For every $z\in H^1(\T^2;\R^2)$,
\begin{equation}
\label{eq:Hodge-identity}
    \norm{\nabla z}{L^2}^2
    =\norm{\Div z}{L^2}^2+\norm{\Curl z}{L^2}^2.
\end{equation}
Consequently,
\begin{equation}
\label{eq:Hodge-H1-estimate}
    \norm{z}{H^1}^2
    \leq C\left(
       \norm{z}{L^2}^2+\norm{\Div z}{L^2}^2
       +\norm{\Curl z}{L^2}^2
    \right).
\end{equation}
\end{lemma}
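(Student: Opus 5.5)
The identity \eqref{eq:Hodge-identity} will be proved on the Fourier side, using that $\T^2$ has no boundary and that the torus has no nonzero harmonic vector fields other than constants. First assume $z$ is smooth (a trigonometric polynomial suffices). We then extend to $H^1$ by density, since both sides of \eqref{eq:Hodge-identity} are continuous on $H^1$.

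For smooth $z$, the plan is to expand both sides and integrate by parts with \eqref{eq:periodic-integration-by-parts}. We have
\[
\norm{\nabla z}{L^2}^2=\sum_{i,j}\norm{\partial_jz_i}{L^2}^2
\]
and
\[
\norm{\Div z}{L^2}^2+\norm{\Curl z}{L^2}^2
 =\sum_{i}\norm{\partial_iz_i}{L^2}^2+2\ip{\partial_1z_1}{\partial_2z_2}
 +\norm{\partial_1z_2}{L^2}^2+\norm{\partial_2z_1}{L^2}^2-2\ip{\partial_1z_2}{\partial_2z_1}.
\]
The two sides therefore differ by $2\ip{\partial_1z_1}{\partial_2z_2}-2\ip{\partial_1z_2}{\partial_2z_1}$. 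Integrating by parts twice, once in $x_1$ and once in $x_2$, gives
\[
\ip{\partial_1z_1}{\partial_2z_2}=\ip{\partial_2z_1}{\partial_1z_2},
\]
so this difference vanishes. This proves \eqref{eq:Hodge-identity} for smooth $z$.

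As a cross-check on the Fourier side, write $\hat z(k)$ for the Fourier coefficients with $k\in2\pi\mathbb Z^2$. Then
\[
|k\cdot\hat z|^2+|k_1\hat z_2-k_2\hat z_1|^2=|k|^2|\hat z|^2,
\]
which is Lagrange's identity in two dimensions; summing over $k$ with Parseval gives the same conclusion.

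For \eqref{eq:Hodge-H1-estimate}, we use $\norm{z}{H^1}^2=\norm{z}{L^2}^2+\norm{\nabla z}{L^2}^2$ and substitute \eqref{eq:Hodge-identity}. This yields the estimate with $C=1$, or a fixed constant if a different equivalent $H^1$ norm is in use.

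The only step that needs care is the density argument. Smooth periodic fields are dense in $H^1(\T^2)$, and $\Div$ and $\Curl$ are bounded maps from $H^1$ to $L^2$, so both sides of \eqref{eq:Hodge-identity} pass to the limit. I expect no genuine obstacle here.
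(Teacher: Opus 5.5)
Your proof is correct and is essentially the paper's argument: expand $\|\Div z\|_{L^2}^2+\|\Curl z\|_{L^2}^2$, cancel the mixed terms using the two integrations by parts that give $\int\partial_1z_1\,\partial_2z_2=\int\partial_2z_1\,\partial_1z_2$, and add $\|z\|_{L^2}^2$ to get the $H^1$ bound with $C=1$. Your density step and the Fourier (Lagrange-identity) cross-check justify the second-derivative integration by parts for general $H^1$ fields, which the paper leaves implicit; your opening remark about harmonic vector fields plays no role and can be dropped.
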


\begin{proof}
Expanding the right-hand side of \eqref{eq:Hodge-identity} gives
\[
\begin{aligned}
 &\int_{\T^2}
  \bigl((\partial_1z_1)^2+(\partial_2z_2)^2
       (\partial_1z_2)^2+(\partial_2z_1)^2\bigr)\,\dd x\\
 &\quad
 +2\int_{\T^2}
  \bigl(\partial_1z_1\,\partial_2z_2
       \partial_1z_2\,(-\partial_2z_1)\bigr)\,\dd x.
\end{aligned}
\]
The first integral is $\norm{\nabla z}{L^2}^2$.  The two mixed
integrals cancel because two integrations by parts give
$\int\partial_1z_1\,\partial_2z_2
=\int\partial_2z_1\,\partial_1z_2$.  This proves
\eqref{eq:Hodge-identity}; adding $\norm{z}{L^2}^2$ proves
\eqref{eq:Hodge-H1-estimate}.
\end{proof}

\begin{lemma}[Ladyzhenskaya inequality]
\label{lem:Ladyzhenskaya}
There exists $C>0$, depending only on $\T^2$, such that
\begin{equation}
\label{eq:Ladyzhenskaya}
    \norm{z}{L^4}^2
    \leq C\norm{z}{L^2}\norm{z}{H^1}
\end{equation}
for every $z\in H^1(\T^2)$.  Hence
\begin{equation}
\label{eq:Ladyzhenskaya-fourth-power}
    \norm{z}{L^4}^4
    \leq C\norm{z}{L^2}^2
       \bigl(\norm{z}{L^2}^2+\norm{\nabla z}{L^2}^2\bigr).
\end{equation}
If $\norm{z}{H^1}\leq M$, then
\begin{equation}
\label{eq:Ladyzhenskaya-uniform-H1}
    \norm{z}{L^4}^4\leq C_M\norm{z}{L^2}^2.
\end{equation}
\end{lemma}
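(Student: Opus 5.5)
The statement to prove is the Ladyzhenskaya inequality, Lemma~\ref{lem:Ladyzhenskaya}. The cleanest route on $\T^2$ is the classical one-dimensional slicing argument, applied to functions with zero mean, with the mean handled separately.

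\textbf{Step 1: the mean-zero case.} Write $z=\bar z+\tilde z$, where $\bar z=\int_{\T^2}z\,\dd x$ and $\tilde z$ has zero mean. For smooth scalar $f$ with zero mean in $x_1$ on each horizontal line we have
\[
\sup_{x_1}|f(x_1,x_2)|^2\leq 2\int_0^1|f\,\partial_1f|\,\dd x_1 .
\]
If $f$ does not have zero line-means, we instead use $|f(x_1)|^2\leq \int|f|^2\dd x_1+2\int|f\partial_1 f|\dd x_1$. The analogous bound holds in $x_2$. Then
\[
\int_{\T^2}|f|^4\,\dd x\leq \int_0^1\sup_{x_1}|f|^2\,\dd x_2\cdot\int_0^1\sup_{x_2}|f|^2\,\dd x_1 ,
\]
and each factor is at most $C\bigl(\norm{f}{L^2}^2+\norm{f}{L^2}\norm{\nabla f}{L^2}\bigr)\leq C\norm{f}{L^2}\norm{f}{H^1}$ by Cauchy--Schwarz. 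This gives $\norm{f}{L^4}^4\leq C\norm{f}{L^2}^2\norm{f}{H^1}^2$ directly, so the general-mean version is already sufficient and the mean need not be split off at all.

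\textbf{Step 2: extension to $H^1$ and vector fields.} The inequality extends from smooth functions to all of $H^1(\T^2)$ by density. For vector-valued $z$ we apply it componentwise and use $|z|^4\leq 2(z_1^4+z_2^4)$. Taking square roots yields \eqref{eq:Ladyzhenskaya}.

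\textbf{Step 3: the consequences.} Squaring \eqref{eq:Ladyzhenskaya} and using $\norm{z}{H^1}^2\simeq\norm{z}{L^2}^2+\norm{\nabla z}{L^2}^2$ gives \eqref{eq:Ladyzhenskaya-fourth-power}. Bounding the bracket by $M^2$ then gives \eqref{eq:Ladyzhenskaya-uniform-H1} with $C_M=CM^2$.

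\textbf{Main obstacle.} The only delicate point is the periodic one-dimensional sup bound, where there is no decay at infinity to exploit. We handle it by choosing a point $y$ with $|f(y)|^2\leq\int|f|^2$ (which exists by the mean value theorem) and integrating $\partial_1(f^2)$ from $y$. Everything else is routine.
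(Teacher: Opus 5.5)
The paper gives no proof of this lemma; it cites it as standard (Ladyzhenskaya; Adams--Fournier). Your line-by-line slicing argument is the classical proof from those sources, and it is correctly adapted to $\T^2$: you start the one-dimensional integration at a point where $|f|^2$ does not exceed its line average, which yields $\norm{f}{L^4}^4\leq C\norm{f}{L^2}^2\norm{f}{H^1}^2$ and hence all three displayed bounds.

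Two minor points. The opening mean-splitting remark is superfluous, since a globally mean-zero function need not have zero line means; you discard it yourself. In the density step, say how the $L^4$ norm passes to the limit, for example by applying the inequality to $f_n-f_m$ or by Fatou's lemma.
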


These standard inequalities may be found, for example, in
\cite{Ladyzhenskaya1969,AdamsFournier2003}.

We shall also use
\begin{equation}
\label{eq:tensor-product-L2}
    \norm{b\otimes c}{L^2}
    \leq\norm{b}{L^4}\norm{c}{L^4}.
\end{equation}

\subsection{The exact unfiltered error system}

Set
\begin{equation}
\label{eq:unfiltered-error-definition}
    \theta=p^{\eps,\delta}-p,\qquad
    v=u^{\eps,\delta}-u,\qquad
    R=U^{\eps,\delta}-U.
\end{equation}
Subtracting \eqref{eq:limit-momentum-stress} and the identity
$U=u\otimes u-\nabla u$ from \eqref{eq:relaxation-system} gives
\begin{equation}
\label{eq:unfiltered-error-system}
\left\{
\begin{aligned}
    \eps\partial_t\theta+\Div v
       &=-\eps\partial_tp,\\
    \partial_tv+\Div R+\nabla\theta&=0,\\
    \delta\partial_tR+\nabla v
       &=B_u(v)+v\otimes v-R-\delta\partial_tU.
\end{aligned}
\right.
\end{equation}
This system is exact.  Its natural weighted energy is
\begin{equation}
\label{eq:unfiltered-weighted-energy}
    \mathcal E_0(t)
    :=\frac{\eps}{2}\norm{\theta(t)}{L^2}^2
      +\frac12\norm{v(t)}{L^2}^2
      +\frac{\delta}{2}\norm{R(t)}{L^2}^2.
\end{equation}
Testing the three equations by $\theta$, $v$, and $R$ and using
\eqref{eq:periodic-integration-by-parts}, the singular first-order
terms cancel and one obtains
\begin{equation}
\label{eq:unfiltered-weighted-energy-identity}
\begin{aligned}
    \frac{\dd}{\dd t}\mathcal E_0+\norm{R}{L^2}^2
    ={}&\ip{B_u(v)+v\otimes v}{R}\\
       &-\eps\ip{\partial_tp}{\theta}
        -\delta\ip{\partial_tU}{R}.
\end{aligned}
\end{equation}
The term $v\otimes v$ is the reason why
\eqref{eq:unfiltered-weighted-energy-identity} alone does not close the
derivative-level large-perturbation argument.

\subsection{Baseline large-perturbation estimates}

We record precisely the part of the two-dimensional theory in
\cite[Theorem~2.8 and Section~4.3]{HuangRohdeZhang2026} used below.

\begin{proposition}[Baseline estimates]
\label{prop:baseline-large-perturbation}
Suppose that
\eqref{eq:main-initial-velocity-L2}--\eqref{eq:main-initial-stress} hold.
There exists $\mu_*>0$, depending only on $C_0$, $T$, and the smooth
limit solution, such that for sufficiently small parameters satisfying
$0<\eps\leq\mu_*\delta$, the solution of
\eqref{eq:relaxation-system} exists on $[0,T]$.  Moreover,
\begin{equation}
\label{eq:baseline-weighted-error}
    \sup_{0\leq t\leq T}
    \left(
       \eps\norm{\theta(t)}{L^2}^2
       +\norm{v(t)}{L^2}^2
       +\delta\norm{R(t)}{L^2}^2
    \right)
    \leq C_T\delta,
\end{equation}
and
\begin{equation}
\label{eq:baseline-curl-div}
\begin{aligned}
 \sup_{0\leq t\leq T}
 \left(
   \norm{\Curl u^{\eps,\delta}(t)}{L^2}^2
   +\delta\norm{\Curl u^{\eps,\delta}(t)}{H^1}^2
 \right)&\leq C_T,\\
 \sup_{0\leq t\leq T}
 \left(
   \norm{\Div u^{\eps,\delta}(t)}{L^2}^2
   +\delta\norm{\Div u^{\eps,\delta}(t)}{H^1}^2
 \right)&\leq C_T\frac{\eps}{\delta}.
\end{aligned}
\end{equation}
\end{proposition}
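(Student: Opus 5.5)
The plan is a continuity (bootstrap) argument that couples the weighted energy identity \eqref{eq:unfiltered-weighted-energy-identity} with derivative-level estimates for $\Curl u^{\eps,\delta}$ and $\Div u^{\eps,\delta}$. This follows \cite[Theorem~2.8 and Section~4.3]{HuangRohdeZhang2026}. Local existence of a smooth solution comes from symmetric-hyperbolic theory \cite{Kato1975}. Let $T^*\le T$ be the maximal time on which the bounds \eqref{eq:baseline-curl-div} hold with $C_T$ replaced by $2C_T$. On $[0,T^*]$ I would first close \eqref{eq:baseline-weighted-error}, and then improve the derivative bounds back to $C_T$, so that $T^*=T$ and the solution is global on $[0,T]$.

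\textbf{Weighted energy on $[0,T^*]$.} The only dangerous term in \eqref{eq:unfiltered-weighted-energy-identity} is $\ip{v\otimes v}{R}$. I would estimate it with \eqref{eq:tensor-product-L2} and Lemma~\ref{lem:Ladyzhenskaya}:
\[
|\ip{v\otimes v}{R}|\le C\norm{v}{L^2}\norm{v}{H^1}\norm{R}{L^2}.
\]
Lemma~\ref{lem:Hodge-identity} together with the bootstrap hypothesis gives
\[
\norm{v}{H^1}\le C\bigl(1+\norm{\Curl u^{\eps,\delta}}{L^2}+\norm{\Div u^{\eps,\delta}}{L^2}\bigr)\le C_T\bigl(1+\sqrt{\mu_*}\bigr),
\]
since $\eps/\delta\le\mu_*$. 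The term therefore splits as $\frac14\norm{R}{L^2}^2+C_T\norm{v}{L^2}^2$. The remaining terms are handled as follows:
\begin{itemize}
\item $B_u(v)$ is handled by \eqref{eq:Bu-basic-estimates};
\item $\eps\ip{\partial_tp}{\theta}\le \eps\norm{\theta}{L^2}^2+C\eps$;
\item $\delta\ip{\partial_tU}{R}\le\frac14\norm{R}{L^2}^2+C\delta^2$.
\end{itemize}
Gronwall then yields $\mathcal E_0(t)\le C_T\bigl(\mathcal E_0(0)+\delta\bigr)$. The initial assumptions \eqref{eq:main-initial-velocity-L2}, \eqref{eq:main-initial-pressure} and \eqref{eq:main-initial-stress} give $\mathcal E_0(0)\le C\delta$, which proves \eqref{eq:baseline-weighted-error}.

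\textbf{Derivative estimates.} I would eliminate $U^{\eps,\delta}$ between the second and third equations of \eqref{eq:relaxation-system}. Taking the divergence of the third equation and substituting $\Div U^{\eps,\delta}=-\partial_tu^{\eps,\delta}-\nabla p^{\eps,\delta}$ gives a damped wave (telegraph) equation
\[
\delta\partial_t^2u^{\eps,\delta}+\partial_tu^{\eps,\delta}-\Delta u^{\eps,\delta}=-\Div(u^{\eps,\delta}\otimes u^{\eps,\delta})-\nabla(\delta\partial_tp^{\eps,\delta}+p^{\eps,\delta}).
\]
\begin{itemize}
\item Applying $\Curl$ removes the pressure. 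For $\omega=\Curl u^{\eps,\delta}$ the energy functional $\norm{\omega}{L^2}^2+\delta\norm{\nabla\omega}{L^2}^2$ plus cross terms is bounded, with the nonlinearity controlled by Ladyzhenskaya and \eqref{eq:baseline-weighted-error}.
\item Applying $\Div$ and using $\eps\partial_tp^{\eps,\delta}=-\Div u^{\eps,\delta}$ gives a telegraph equation for $d=\Div u^{\eps,\delta}$ with additional stiffness $\eps^{-1}$ (acoustic speed $\eps^{-1/2}$). Estimating $\norm{d}{L^2}^2+\delta\norm{d}{H^1}^2$ with the weight $\eps/\delta$ in \eqref{eq:main-initial-curl-div} uses the initial $H^1$, $H^2$ pressure bounds in \eqref{eq:main-initial-pressure}.
\end{itemize}
In both estimates the factor $\eps/\delta\le\mu_*$ enters as the smallness constant. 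For $\mu_*$ small and the parameters small, the nonlinear contributions are absorbed and the bounds hold with $C_T$ rather than $2C_T$, which closes the bootstrap.

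The main obstacle is the divergence estimate. The acoustic coupling produces terms of size $\eps^{-1/2}$, and the mixed energy for the hyperbolic second-order system is coercive only when $\eps\le\mu_*\delta$. I would first construct the cross-term functional, e.g.\ adding $\delta\ip{\partial_td}{d}$-type corrections, check its coercivity under $\eps/\delta\le\mu_*$, and then verify that $\Div\Div(u^{\eps,\delta}\otimes u^{\eps,\delta})$ can be bounded using only the bootstrap $H^1$ control in two dimensions.
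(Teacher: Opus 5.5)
Your weighted-energy step is, in substance, the paper's argument. The paper quotes from Section~4.3 of \cite{HuangRohdeZhang2026} a Gronwall inequality $E'\leq CE^2+C_TE+C_T\delta$ for the rescaled energy. You get the same kind of bound directly from \eqref{eq:unfiltered-weighted-energy-identity}, handling $\ip{v\otimes v}{R}$ by \eqref{eq:tensor-product-L2} and Ladyzhenskaya.

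One step needs justification. \eqref{eq:Hodge-H1-estimate} contains $\norm{v}{L^2}$, which the curl/div bounds do not control, so your bound $\norm{v}{H^1}\leq C_T(1+\sqrt{\mu_*})$ is not justified as written. There are two fixes:
\begin{itemize}
\item keep the resulting term $C\norm{v}{L^2}^4$ (a $CE^2$ term, as in the quoted inequality) and close by continuity from $\mathcal E_0(0)\leq C\delta$; or
\item note that $\int_{\T^2}v\,\dd x$ is conserved because $\partial_tv=-\Div R-\nabla\theta$, so Poincar\'e gives $\norm{v}{L^2}\leq\norm{v_0}{L^2}+C\norm{\nabla v}{L^2}$.
\end{itemize}
With the second fix, your step uses only the \emph{statement} of Theorem~2.8 in \cite{HuangRohdeZhang2026}, not an inequality from inside its proof. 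That is slightly cleaner than the paper.

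The gap is the other half. Existence on $[0,T]$ and \eqref{eq:baseline-curl-div} are the whole content of Theorem~2.8 in \cite{HuangRohdeZhang2026}. The paper does not reprove them; it imports them. Your bootstrap only announces the improvement from $2C_T$ to $C_T$, and the announced route misreads the divergence estimate. Subtract the limit system and use $\Delta\partial_tp+\Div\Div\partial_tU=-\partial_{tt}\Div u=0$. Then $d=\Div u^{\eps,\delta}$ satisfies
\[
  \delta\partial_t^2d+\partial_td-\Delta d
  =-\Div\Div\bigl(B_u(v)+v\otimes v\bigr)
   -\Delta\Bigl(\theta-\frac{\delta}{\eps}\,d\Bigr).
\]
This is not a closed telegraph equation for $d$. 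It is driven by $\Delta\theta$, and $\theta$ is controlled only with the weight $\sqrt{\eps}$. So the target $\norm{d}{L^2}^2\lesssim\eps/\delta$ has to exploit the coupling with $\theta$, naturally through the combination $\sqrt{\eps}\,\theta-\frac{\delta}{\sqrt{\eps}}\,d$ (the analogue of $Y$ in Section~\ref{sec:corrector}). Adding $\delta\ip{\partial_td}{d}$-type cross terms to a scalar energy for $d$ does not address this.

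Moreover, $\Div\Div(v\otimes v)$ and the nonlinearity in the estimate for $\delta\norm{\Curl u^{\eps,\delta}}{H^1}^2$ contain second derivatives of $v$. In two dimensions these are not controlled by $H^1$ bounds alone, contrary to your plan to bound them ``using only the bootstrap $H^1$ control''. The $\delta$-weighted parts of \eqref{eq:baseline-curl-div} and time-integrated dissipation must enter.

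As written, the proposal does not establish existence or \eqref{eq:baseline-curl-div}. The sound course is the paper's: take both from the cited theorem. Your Gronwall argument then yields \eqref{eq:baseline-weighted-error} directly.
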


\begin{proof}
The existence assertion and \eqref{eq:baseline-curl-div} are exactly
the two-dimensional conclusions of
\cite[Theorem~2.8]{HuangRohdeZhang2026}.  We explain why the proof also
provides \eqref{eq:baseline-weighted-error}.  Define the rescaled error
\[
    q=\sqrt{\eps}\,\theta,\qquad v=v,\qquad
    V=\sqrt{\delta}\,R.
\]
The equations for $(q,v,V)$ form a symmetric-hyperbolic system.  In
Section~4.3 of \cite{HuangRohdeZhang2026}, its $L^2$ energy
\[
    E(t)=\norm{q(t)}{L^2}^2+\norm{v(t)}{L^2}^2
         +\norm{V(t)}{L^2}^2
\]
satisfies $E(0)\leq C\delta$ and, after the vorticity and divergence
bootstrap estimates have been established,
\[
    \frac{\dd}{\dd t}E(t)
    \leq CE(t)^2+C_TE(t)+C_T\delta.
\]
The continuation argument in that proof closes on $[0,T]$ and yields
\[
    \sup_{0\leq t\leq T}E(t)\leq C_T\delta.
\]
Replacing $q$ and $V$ by their definitions gives
\eqref{eq:baseline-weighted-error}.
\end{proof}

\begin{corollary}[Uniform first-order velocity bound]
\label{cor:baseline-uniform-H1}
Under the assumptions of
Proposition~\ref{prop:baseline-large-perturbation},
\begin{equation}
\label{eq:baseline-uniform-H1}
    \sup_{0\leq t\leq T}
    \left(
       \norm{u^{\eps,\delta}(t)}{H^1}
       +\norm{v(t)}{H^1}
    \right)\leq C_T.
\end{equation}
\end{corollary}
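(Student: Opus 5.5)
The bound on $u^{\eps,\delta}$ comes from Lemma~\ref{lem:Hodge-identity} combined with Proposition~\ref{prop:baseline-large-perturbation}; the bound on $v=u^{\eps,\delta}-u$ then follows because $u$ is smooth. Apply \eqref{eq:Hodge-H1-estimate} to $z=u^{\eps,\delta}(t)$ for each fixed $t\in[0,T]$:
\[
    \norm{u^{\eps,\delta}(t)}{H^1}^2
    \leq C\left(
       \norm{u^{\eps,\delta}(t)}{L^2}^2
       +\norm{\Div u^{\eps,\delta}(t)}{L^2}^2
       +\norm{\Curl u^{\eps,\delta}(t)}{L^2}^2
    \right).
\]
We bound the three terms on the right separately.

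The curl term is at most $C_T$ by the first line of \eqref{eq:baseline-curl-div}. The divergence term is at most $C_T\eps/\delta$ by the second line, and $C_T\eps/\delta\leq C_T\mu_*$ under the standing restriction $\eps\leq\mu_*\delta$. For the $L^2$ term we write $u^{\eps,\delta}=u+v$. The smooth limit solution gives $\sup_t\norm{u(t)}{L^2}\leq C_T$, and \eqref{eq:baseline-weighted-error} gives $\sup_t\norm{v(t)}{L^2}^2\leq C_T\delta\leq C_T$ for small parameters. Together these yield $\sup_{0\leq t\leq T}\norm{u^{\eps,\delta}(t)}{H^1}\leq C_T$.

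For the second part, the triangle inequality gives $\norm{v}{H^1}\leq\norm{u^{\eps,\delta}}{H^1}+\norm{u}{H^1}$. Here $\norm{u}{H^1}$ is controlled by finitely many smooth norms of the limit solution, which the convention for $C_T$ allows.

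There is no real obstacle. The only point needing care is the divergence term. Its uniform bound is exactly where the upper parameter condition $\eps\leq\mu_*\delta$ enters, so the constant depends on $\mu_*$. That dependence is harmless, since $\mu_*$ itself depends only on $C_0$, $T$ and the limit solution.
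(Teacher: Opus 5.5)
Your proposal is correct and follows the paper's proof: you bound the gradient with the Hodge identity and the curl and divergence estimates of Proposition~\ref{prop:baseline-large-perturbation}, using $\eps/\delta\leq\mu_*$, and the $L^2$ norm via $u^{\eps,\delta}=u+v$ and \eqref{eq:baseline-weighted-error}. The triangle inequality then gives the bound for $v$.
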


\begin{proof}
By \eqref{eq:Hodge-identity}, \eqref{eq:baseline-curl-div}, and
$\eps/\delta\leq\mu_*$,
\[
    \norm{\nabla u^{\eps,\delta}}{L^2}^2
    =\norm{\Curl u^{\eps,\delta}}{L^2}^2
     +\norm{\Div u^{\eps,\delta}}{L^2}^2
    \leq C_T.
\]
Furthermore, \eqref{eq:baseline-weighted-error} and smoothness of $u$
give $\norm{u^{\eps,\delta}}{L^2}
\leq\norm{u}{L^2}+\norm{v}{L^2}\leq C_T$.  This proves the first
bound.  Since $v=u^{\eps,\delta}-u$, the second follows.
\end{proof}

\begin{remark}
\label{rem:weighted-pressure-only}
The baseline estimate gives only
\[
    \sqrt{\eps}\,
    \norm{p^{\eps,\delta}-p}{L^\infty(0,T;L^2)}
    \leq C_T\delta^{1/2}.
\]
It therefore does not imply strong pressure convergence.
\end{remark}

\subsection{Initial size of the corrector}

For brevity write
\[
    \pi_0=p_0^{\eps,\delta}-p_0,\qquad
    a_0=u_0^{\eps,\delta}-u_0,\qquad
    S_0=U_0^{\eps,\delta}-U_0.
\]
The initial compensated variables are
\begin{equation}
\label{eq:initial-compensated-variables}
    X_0=\delta\Div S_0+\delta\nabla\pi_0-a_0,\qquad
    Y_0=\sqrt{\eps}\,\pi_0
       -\frac{\delta}{\sqrt{\eps}}\Div a_0.
\end{equation}

\begin{lemma}[Initial corrector bounds]
\label{lem:initial-corrector-bounds}
Under \eqref{eq:main-initial-velocity-L2}--
\eqref{eq:main-initial-stress} and $\eps\leq\mu_*\delta$,
\begin{equation}
\label{eq:initial-corrector-basic-bound}
    \eps\norm{\pi_0}{L^2}^2+\norm{a_0}{L^2}^2
    +\delta\norm{S_0}{L^2}^2\leq C\delta
\end{equation}
and
\begin{equation}
\label{eq:corrector-initial-bound}
    \norm{X_0}{L^2}^2+\norm{Y_0}{L^2}^2
    +\delta\norm{\nabla a_0}{L^2}^2\leq C\delta.
\end{equation}
\end{lemma}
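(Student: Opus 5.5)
The plan is to read off both bounds directly from the initial assumptions \eqref{eq:main-initial-velocity-L2}--\eqref{eq:main-initial-stress}, using the smoothness of $(p,u,U)$ and the remark after \eqref{eq:main-initial-stress} that these assumptions continue to hold for the differences $\pi_0$, $S_0$ after enlarging $C_0$. For \eqref{eq:initial-corrector-basic-bound} nothing further is needed. The first term of \eqref{eq:main-initial-pressure} applied to $\pi_0$ gives $\eps\norm{\pi_0}{L^2}^2\leq C\delta$. The term $\norm{a_0}{L^2}^2\leq C_0\delta$ is exactly \eqref{eq:main-initial-velocity-L2}. Finally, \eqref{eq:main-initial-stress} gives $\norm{S_0}{L^2}^2\leq C$, hence $\delta\norm{S_0}{L^2}^2\leq C\delta$.

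For \eqref{eq:corrector-initial-bound} I would treat the three quantities separately.

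\textbf{The $\nabla a_0$ term.} By the Hodge identity \eqref{eq:Hodge-identity},
\[
\delta\norm{\nabla a_0}{L^2}^2=\delta\norm{\Div a_0}{L^2}^2+\delta\norm{\Curl a_0}{L^2}^2 .
\]
Because $u_0$ is smooth and divergence free, \eqref{eq:main-initial-curl-div} gives
\[
\norm{\Curl a_0}{L^2}^2\leq C,\qquad \norm{\Div a_0}{L^2}^2\leq C\eps/\delta\leq C\mu_* .
\]
Hence $\delta\norm{\nabla a_0}{L^2}^2\leq C\delta$.

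\textbf{The $X_0$ term.} By the triangle inequality,
\[
\norm{X_0}{L^2}^2\leq 3\delta^2\norm{\Div S_0}{L^2}^2+3\delta^2\norm{\nabla\pi_0}{L^2}^2+3\norm{a_0}{L^2}^2 .
\]
The last term is $O(\delta)$. The middle term is bounded by $\delta^2\norm{\pi_0}{H^1}^2\leq C\delta$, using the second term of \eqref{eq:main-initial-pressure}. For the first term, \eqref{eq:main-initial-stress} gives $\delta\norm{S_0}{H^1}^2\leq C$. Therefore
\[
\delta^2\norm{\Div S_0}{L^2}^2\leq\delta\cdot\delta\norm{S_0}{H^1}^2\leq C\delta .
\]

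\textbf{The $Y_0$ term.} Similarly,
\[
\norm{Y_0}{L^2}^2\leq 2\eps\norm{\pi_0}{L^2}^2+2\frac{\delta^2}{\eps}\norm{\Div a_0}{L^2}^2 .
\]
The first term is $O(\delta)$. For the second, \eqref{eq:main-initial-curl-div} gives $\norm{\Div a_0}{L^2}^2=\norm{\Div u_0^{\eps,\delta}}{L^2}^2\leq C_0\eps/\delta$, so
\[
\frac{\delta^2}{\eps}\norm{\Div a_0}{L^2}^2\leq C_0\delta .
\]
This is the one place where the $\eps/\delta$ scaling in the divergence assumption is essential: the factor $\eps$ cancels exactly against the $\delta^2/\eps$ weight.

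I do not expect a genuine obstacle in this lemma. The only care needed is the bookkeeping above, checking that each weight matches the corresponding assumption: the $\delta^2$ weights against the $H^1$ parts of \eqref{eq:main-initial-pressure} and \eqref{eq:main-initial-stress}, and $\delta^2/\eps$ against the $\eps/\delta$ divergence bound. The condition $\eps\leq\mu_*\delta$ enters only in the $\nabla a_0$ estimate. The constants depend only on $C_0$, $\mu_*$, and the smooth initial data $(p_0,u_0,U_0)$.
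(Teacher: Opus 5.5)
Your proof is correct and follows the paper's argument essentially step by step: you read the basic bound off the assumptions, use the triangle inequality for $X_0$ and $Y_0$ with the cancellation $\frac{\delta^2}{\eps}\cdot\frac{\eps}{\delta}=\delta$, and apply the Hodge identity for $\delta\norm{\nabla a_0}{L^2}^2$. One small correction to your closing remark: $\eps\leq\mu_*\delta$ is also used, implicitly through the remark on differences, in the bound $\eps\norm{\pi_0}{L^2}^2\leq C\delta$ (and hence in the first term of your $Y_0$ estimate), since the limit pressure contributes $\eps\norm{p_0}{L^2}^2\leq C\eps$, which is $O(\delta)$ only because $\eps\lesssim\delta$; the paper makes this explicit as $C(\delta+\eps)\leq C\delta$.
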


\begin{proof}
The smooth limit data are independent of the small parameters.  Using
\eqref{eq:main-initial-velocity-L2} together with
\eqref{eq:main-initial-pressure} and
\eqref{eq:main-initial-stress}, we obtain
\[
    \eps\norm{\pi_0}{L^2}^2+\norm{a_0}{L^2}^2
    +\delta\norm{S_0}{L^2}^2
    \leq C(\delta+\eps)\leq C\delta.
\]
Next,
\[
    \delta^2\norm{\Div S_0}{L^2}^2
    +\delta^2\norm{\nabla\pi_0}{L^2}^2
    +\norm{a_0}{L^2}^2\leq C\delta.
\]
Indeed, the first term follows from the stress bound
\[
    \delta\norm{U_0^{\eps,\delta}}{H^1}^2\leq C_0,
\]
whereas the second follows from the pressure bound
\[
    \delta^2\norm{p_0^{\eps,\delta}}{H^1}^2\leq C_0\delta.
\]
The corresponding limit terms are smooth.  Hence
$\norm{X_0}{L^2}^2\leq C\delta$.

Since $\Div u_0=0$, one has $\Div a_0=\Div u_0^{\eps,\delta}$.
Consequently,
\[
\begin{aligned}
    \norm{Y_0}{L^2}^2
    &\leq2\eps\norm{\pi_0}{L^2}^2
      +2\frac{\delta^2}{\eps}\norm{\Div a_0}{L^2}^2\\
    &\leq C\delta
      C\frac{\delta^2}{\eps}\frac{\eps}{\delta}
    \leq C\delta.
\end{aligned}
\]
Finally, the Hodge identity and
\eqref{eq:main-initial-curl-div} give
\[
\begin{aligned}
    \delta\norm{\nabla a_0}{L^2}^2
    &=\delta\norm{\Curl a_0}{L^2}^2
      +\delta\norm{\Div a_0}{L^2}^2\\
    &\leq C\delta+C\eps\leq C\delta.
\end{aligned}
\]
Combining these estimates proves \eqref{eq:corrector-initial-bound}.
\end{proof}

\section{The acoustic--stress corrector}
\label{sec:corrector}

\subsection{Basic weighted energy}

\begin{lemma}[Basic corrector energy]
\label{lem:corrector-basic-energy}
Let $(\pi,a,S)$ solve \eqref{eq:corrector-introduction}.  Define
\begin{equation}
\label{eq:corrector-basic-energy}
    \mathcal D(t)
    :=\frac{\eps}{2}\norm{\pi(t)}{L^2}^2
      +\frac12\norm{a(t)}{L^2}^2
      +\frac{\delta}{2}\norm{S(t)}{L^2}^2.
\end{equation}
Then
\begin{equation}
\label{eq:corrector-basic-identity}
\begin{aligned}
    \frac{\dd}{\dd t}\mathcal D+\norm{S}{L^2}^2
    =\ip{B_u(a)}{S}
      -\eps\ip{\partial_tp}{\pi}
      -\delta\ip{\partial_tU}{S}.
\end{aligned}
\end{equation}
In particular,
\begin{equation}
\label{eq:corrector-basic-differential}
    \frac{\dd}{\dd t}\mathcal D
    +\frac12\norm{S}{L^2}^2
    \leq C_T\mathcal D+C_T(\eps+\delta^2).
\end{equation}
\end{lemma}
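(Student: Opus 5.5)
The plan is a direct $L^2$ energy computation, in the same way as \eqref{eq:unfiltered-weighted-energy-identity} was obtained for the unfiltered error. I will test the first equation of \eqref{eq:corrector-introduction} by $\pi$, the second by $a$, and the third by $S$ (Frobenius pairing), and then add the three identities. The time-derivative terms give exactly $\frac{\dd}{\dd t}\mathcal D$, since $\eps$ and $\delta$ are constants. The singular first-order terms cancel by \eqref{eq:periodic-integration-by-parts}. Indeed, $\ip{\Div a}{\pi}+\ip{\nabla\pi}{a}=0$ and $\ip{\Div S}{a}+\ip{\nabla a}{S}=0$. The damping term $-\ip{S}{S}$ moves to the left as $\norm{S}{L^2}^2$. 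The remaining terms are $\ip{B_u(a)}{S}$ together with the two forcing terms $-\eps\ip{\partial_tp}{\pi}$ and $-\delta\ip{\partial_tU}{S}$. This gives \eqref{eq:corrector-basic-identity}. Smoothness of the corrector (Kato theory) justifies all the integrations by parts on $\T^2$.

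For \eqref{eq:corrector-basic-differential}, I will bound each right-hand term by Young's inequality, absorbing $S$ into half of the dissipation. First, \eqref{eq:Bu-basic-estimates} gives $\ip{B_u(a)}{S}\leq C_T\norm{a}{L^2}\norm{S}{L^2}\leq\frac14\norm{S}{L^2}^2+C_T\norm{a}{L^2}^2$, and $\norm{a}{L^2}^2\leq2\mathcal D$. Second, $\delta\ip{\partial_tU}{S}\leq\frac14\norm{S}{L^2}^2+\delta^2\norm{\partial_tU}{L^2}^2\leq\frac14\norm{S}{L^2}^2+C_T\delta^2$, since $U$ is smooth. Third, $\eps\ip{\partial_tp}{\pi}\leq\frac{\eps}{2}\norm{\pi}{L^2}^2+\frac{\eps}{2}\norm{\partial_tp}{L^2}^2\leq\mathcal D+C_T\eps$.

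Summing these bounds, a total of $\frac12\norm{S}{L^2}^2$ is absorbed, which yields \eqref{eq:corrector-basic-differential}. The only point needing care is the pressure forcing. It must be weighted as $\sqrt\eps\pi\cdot\sqrt\eps\partial_tp$ so that it is controlled by $\mathcal D$ with an $O(\eps)$ remainder; a naive split would instead leave an unweighted $\norm{\pi}{L^2}^2$. There is no real obstacle beyond this bookkeeping, because the coefficient $u$ in $B_u$ is the smooth limit velocity and not the approximate one.
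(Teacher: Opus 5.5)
Your proposal is correct and follows essentially the paper's argument: test the three corrector equations by $\pi$, $a$, $S$ so that the first-order terms cancel by periodic integration by parts, then use Young's inequality to absorb $\frac14\norm{S}{L^2}^2$ from each of the $B_u(a)$ and $\delta\partial_tU$ terms. You also bound the pressure forcing by $\frac{\eps}{2}\norm{\pi}{L^2}^2+C_T\eps$, exactly as the paper does.
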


\begin{proof}
Take the $L^2$ products of the three equations in
\eqref{eq:corrector-introduction} with $\pi$, $a$, and $S$,
respectively.  The pressure--velocity terms cancel because
\[
    \ip{\Div a}{\pi}+\ip{\nabla\pi}{a}=0,
\]
and the velocity--stress terms cancel because
$\ip{\Div S}{a}+\ip{\nabla a}{S}=0$.  The damping term contributes
$\norm{S}{L^2}^2$, which proves
\eqref{eq:corrector-basic-identity}.

Since $u$, $p$, and $U$ are smooth,
\[
\begin{aligned}
    |\ip{B_u(a)}{S}|
        &\leq\frac14\norm{S}{L^2}^2+C_T\norm{a}{L^2}^2,\\
    \eps|\ip{\partial_tp}{\pi}|
        &\leq C_T\eps+\frac{\eps}{2}\norm{\pi}{L^2}^2,\\
    \delta|\ip{\partial_tU}{S}|
        &\leq\frac14\norm{S}{L^2}^2+C_T\delta^2.
\end{aligned}
\]
Substitution in \eqref{eq:corrector-basic-identity} gives
\eqref{eq:corrector-basic-differential}.
\end{proof}

\begin{corollary}
\label{cor:corrector-basic-bound}
Under the assumptions of Theorem~\ref{thm:main-filtered-pressure},
\begin{equation}
\label{eq:corrector-basic-bound}
    \sup_{0\leq t\leq T}\mathcal D(t)
    +\int_0^T\norm{S(t)}{L^2}^2\,\dd t
    \leq C_T\delta.
\end{equation}
\end{corollary}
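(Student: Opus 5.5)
This is a Gronwall argument built on the differential inequality \eqref{eq:corrector-basic-differential} of Lemma~\ref{lem:corrector-basic-energy}, with the initial energy controlled by Lemma~\ref{lem:initial-corrector-bounds}.

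First we bound $\mathcal D$. Drop the nonnegative dissipation term $\frac12\norm{S}{L^2}^2$ from \eqref{eq:corrector-basic-differential}. This leaves
\[
    \frac{\dd}{\dd t}\mathcal D\leq C_T\mathcal D+C_T(\eps+\delta^2).
\]
Gronwall's inequality on $[0,T]$ then gives
\[
    \sup_{0\leq t\leq T}\mathcal D(t)
    \leq e^{C_TT}\bigl(\mathcal D(0)+C_TT(\eps+\delta^2)\bigr).
\]

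Next we check that every term inside the bracket is $O(\delta)$. By \eqref{eq:initial-corrector-basic-bound} and the corrector data \eqref{eq:main-corrector-data}, $\mathcal D(0)\leq C\delta$. The hypothesis $\eps\leq\mu_*\delta$ gives $\eps\leq C\delta$. Since the parameters are small, we may take $\delta\leq1$, so $\delta^2\leq\delta$. Together these show $\sup_t\mathcal D(t)\leq C_T\delta$.

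Finally we recover the dissipation integral. Integrate \eqref{eq:corrector-basic-differential} over $[0,T]$ and use $\mathcal D(T)\geq0$:
\[
    \frac12\int_0^T\norm{S}{L^2}^2\,\dd t
    \leq\mathcal D(0)+C_T\int_0^T\mathcal D\,\dd t+C_TT(\eps+\delta^2).
\]
By the sup bound just proved, the right-hand side is at most $C_T\delta$. Adding the two bounds gives \eqref{eq:corrector-basic-bound}.

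There is no real obstacle. The only points to watch are that $C_T$ depends on $T$ through the Gronwall exponential, which is allowed, and that the inhomogeneous terms are at most $O(\delta)$, which is exactly where $\eps\leq\mu_*\delta$ and $\delta\leq1$ enter.
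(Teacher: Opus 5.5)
Your proof is correct and follows the paper's argument. The paper likewise takes $\mathcal D(0)\leq C\delta$ from Lemma~\ref{lem:initial-corrector-bounds}, integrates \eqref{eq:corrector-basic-differential}, and applies Gronwall's inequality with $\eps\leq\mu_*\delta$ and $\delta\leq1$; you have just written out separately the sup bound and the recovery of $\int_0^T\norm{S}{L^2}^2\,\dd t$, which the paper compresses into one sentence.
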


\begin{proof}
By Lemma~\ref{lem:initial-corrector-bounds},
$\mathcal D(0)\leq C\delta$.  Since
$\eps\leq\mu_*\delta$ and $\delta\leq1$, integration of
\eqref{eq:corrector-basic-differential} followed by Gronwall's
inequality proves the assertion.
\end{proof}

\subsection{Evolution of the compensated variables}

\begin{lemma}[Compensated equations]
\label{lem:compensated-equations}
The variables in \eqref{eq:main-compensated-variables} satisfy
\begin{equation}
\label{eq:X-algebraic-identity}
    X=-a-\delta\partial_ta
\end{equation}
and the coupled system
\begin{equation}
\label{eq:XY-system}
\left\{
\begin{aligned}
    \partial_tX+\Delta a-\frac1{\sqrt{\eps}}\nabla Y
        &=\Div B_u(a)+\delta\partial_{tt}u,\\
    \partial_tY-\frac1{\sqrt{\eps}}\Div X
        &=-\sqrt{\eps}\,\partial_tp.
\end{aligned}
\right.
\end{equation}
\end{lemma}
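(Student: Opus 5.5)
The plan is a direct computation that uses only the three equations of the corrector system \eqref{eq:corrector-introduction} together with the limit momentum equation \eqref{eq:limit-momentum-stress}. No estimates are involved, and smoothness of the corrector (from symmetric-hyperbolic theory) justifies every differentiation.

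\emph{Step 1: the algebraic identity \eqref{eq:X-algebraic-identity}.} The second corner equation gives $\partial_ta=-\Div S-\nabla\pi$. Multiplying by $\delta$ gives $\delta\Div S+\delta\nabla\pi=-\delta\partial_ta$. Substituting this into the definition of $X$ yields $X=-a-\delta\partial_ta$ immediately.

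\emph{Step 2: the $X$-equation.} I differentiate $X=\delta\Div S+\delta\nabla\pi-a$ in time and treat the three pieces separately.
\begin{itemize}
\item Taking the divergence of the stress equation gives
\[
\delta\Div\partial_tS=-\Delta a+\Div B_u(a)-\Div S-\delta\,\partial_t\Div U .
\]
\item Taking the gradient of the pressure equation, multiplied by $\delta/\eps$, gives
\[
\delta\nabla\partial_t\pi=-\frac{\delta}{\eps}\nabla\Div a-\delta\nabla\partial_tp .
\]
\item The momentum equation gives $-\partial_ta=\Div S+\nabla\pi$.
\end{itemize}
When these are added, the terms $\pm\Div S$ cancel. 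The remaining singular pieces combine as
\[
\nabla\pi-\frac{\delta}{\eps}\nabla\Div a=\frac1{\sqrt\eps}\nabla Y,
\]
which is exactly the definition of $Y$ divided by $\sqrt\eps$. The leftover limit terms are $-\delta\partial_t(\Div U+\nabla p)$, and \eqref{eq:limit-momentum-stress} turns them into $+\delta\partial_{tt}u$. Rearranging gives the first line of \eqref{eq:XY-system}.

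\emph{Step 3: the $Y$-equation.} I compute
\[
\partial_tY=\sqrt\eps\,\partial_t\pi-\frac{\delta}{\sqrt\eps}\Div\partial_ta .
\]
The pressure equation gives
\[
\sqrt\eps\,\partial_t\pi=-\frac1{\sqrt\eps}\Div a-\sqrt\eps\,\partial_tp .
\]
Step 1 gives $-\delta\partial_ta=X+a$, so
\[
-\frac{\delta}{\sqrt\eps}\Div\partial_ta=\frac1{\sqrt\eps}\Div X+\frac1{\sqrt\eps}\Div a .
\]
The $\eps^{-1/2}\Div a$ terms cancel, leaving $\partial_tY-\eps^{-1/2}\Div X=-\sqrt\eps\,\partial_tp$.

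The only point requiring care is Step 2. There one must check that the $O(\delta/\eps)$ term $\nabla\Div a$ pairs with $\nabla\pi$ with the correct coefficient to form $\eps^{-1/2}\nabla Y$. One must also recognise that the forcing $-\delta\partial_tU-\delta\nabla\partial_tp$ collapses to $\delta\partial_{tt}u$ through the limit equation. Both are sign and weight bookkeeping, and I would verify them term by term.
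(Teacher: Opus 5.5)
Your proposal is correct and follows the paper's proof essentially line by line. It uses the same algebraic identity from the momentum equation, the same divergence of the stress equation with the $\pm\Div S$ cancellation, the same regrouping $\nabla\pi-\frac{\delta}{\eps}\nabla\Div a=\frac{1}{\sqrt{\eps}}\nabla Y$, and the time-differentiated limit equation to produce $\delta\partial_{tt}u$. The only cosmetic difference is in the $Y$-equation: you substitute $-\delta\partial_ta=X+a$ from Step 1, whereas the paper expands $\Div\partial_ta$ through the momentum equation and then recognizes $\Div X$, which is the same computation.
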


\begin{proof}
The second corrector equation gives
$\Div S+\nabla\pi=-\partial_ta$, and hence
\[
    X=\delta(\Div S+\nabla\pi)-a
      =-a-\delta\partial_ta.
\]
To compute $\partial_tX$, take the divergence of the third corrector
equation:
\[
    \delta\Div\partial_tS+\Delta a
    =\Div B_u(a)-\Div S-\delta\Div\partial_tU.
\]
Using the first two corrector equations, we obtain
\[
\begin{aligned}
    \partial_tX
    &=
      \delta\Div\partial_tS+\delta\nabla\partial_t\pi
      -\partial_ta\\
    &=\Div B_u(a)-\Delta a+\nabla\pi
      -\frac{\delta}{\eps}\nabla\Div a
      -\delta\bigl(\Div\partial_tU+\nabla\partial_tp\bigr).
\end{aligned}
\]
Differentiating \eqref{eq:limit-momentum-stress} in time gives
$\Div\partial_tU+\nabla\partial_tp=-\partial_{tt}u$.  Moreover,
\[
    \nabla\pi-\frac{\delta}{\eps}\nabla\Div a
    =\frac1{\sqrt{\eps}}\nabla Y.
\]
These identities yield the first equation in \eqref{eq:XY-system}.

For the second equation, differentiate the definition of $Y$ and use
the first two corrector equations:
\[
\begin{aligned}
    \partial_tY
    &=-\frac1{\sqrt{\eps}}\Div a-\sqrt{\eps}\,\partial_tp
      -\frac{\delta}{\sqrt{\eps}}\Div\partial_ta\\
    &=\frac1{\sqrt{\eps}}
       \bigl(\delta\Div\Div S+\delta\Delta\pi-\Div a\bigr)
      -\sqrt{\eps}\,\partial_tp\\
    &=\frac1{\sqrt{\eps}}\Div X
      -\sqrt{\eps}\,\partial_tp.
\end{aligned}
\]
This completes the proof.
\end{proof}

\subsection{Exact compensated identity and gradient dissipation}

Define
\begin{equation}
\label{eq:compensated-functional}
\begin{aligned}
    \mathcal K(t)
    :={}&\frac12\norm{X(t)}{L^2}^2
        +\frac12\norm{Y(t)}{L^2}^2
        +\frac{\delta}{2}\norm{\nabla a(t)}{L^2}^2\\
       &-\delta\ip{B_u(a(t))}{\nabla a(t)}.
\end{aligned}
\end{equation}

\begin{lemma}[Exact compensated identity]
\label{lem:exact-compensated-identity}
For every $t\in[0,T]$,
\begin{equation}
\label{eq:exact-compensated-identity}
\begin{aligned}
 \frac{\dd}{\dd t}\mathcal K+\norm{\nabla a}{L^2}^2
 ={}&
  2\ip{B_u(a)}{\nabla a}
  +\ip{B_u(X)}{\nabla a}
  -\delta\ip{B_{\partial_tu}(a)}{\nabla a}\\
 &+\delta\ip{\partial_{tt}u}{X}
  -\sqrt{\eps}\ip{\partial_tp}{Y}.
\end{aligned}
\end{equation}
\end{lemma}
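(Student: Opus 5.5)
The identity should follow from an $L^2$ energy computation on the compensated system \eqref{eq:XY-system} of Lemma~\ref{lem:compensated-equations}, together with the algebraic relation \eqref{eq:X-algebraic-identity} and one integration by parts in time. Since the corrector is smooth on $[0,T]$, every pairing below is finite and every differentiation under the integral is legitimate. I would proceed as follows.

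First, I would test the first equation of \eqref{eq:XY-system} with $X$ and the second with $Y$, and add the results. The singular acoustic terms are $-\eps^{-1/2}\ip{\nabla Y}{X}-\eps^{-1/2}\ip{\Div X}{Y}$, and they cancel exactly by \eqref{eq:periodic-integration-by-parts}; this is the physical-space cancellation. For the Laplacian term I would substitute $X=-a-\delta\partial_ta$ from \eqref{eq:X-algebraic-identity}:
\[
\ip{\Delta a}{X}=\norm{\nabla a}{L^2}^2+\frac{\delta}{2}\frac{\dd}{\dd t}\norm{\nabla a}{L^2}^2 .
\]
This gives
\[
\frac{\dd}{\dd t}\Bigl(\tfrac12\norm{X}{L^2}^2+\tfrac12\norm{Y}{L^2}^2+\tfrac{\delta}{2}\norm{\nabla a}{L^2}^2\Bigr)+\norm{\nabla a}{L^2}^2
=\ip{\Div B_u(a)}{X}+\delta\ip{\partial_{tt}u}{X}-\sqrt{\eps}\ip{\partial_tp}{Y}.
\]

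Second, I would treat the convection term $\ip{\Div B_u(a)}{X}$, which is where the main care lies. Integrating by parts and using $\nabla X=-\nabla a-\delta\partial_t\nabla a$ gives
\[
\ip{\Div B_u(a)}{X}=\ip{B_u(a)}{\nabla a}+\delta\ip{B_u(a)}{\partial_t\nabla a}.
\]
The last term contains a time derivative of $\nabla a$ that cannot be controlled directly, so I would integrate by parts in time:
\[
\delta\ip{B_u(a)}{\partial_t\nabla a}=\delta\frac{\dd}{\dd t}\ip{B_u(a)}{\nabla a}-\delta\ip{B_{\partial_tu}(a)}{\nabla a}-\ip{B_u(\delta\partial_ta)}{\nabla a}.
\]
Here I use the product rule $\partial_tB_u(a)=B_{\partial_tu}(a)+B_u(\partial_ta)$, which holds because $B_u(z)$ is bilinear in $(u,z)$. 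The key step is to eliminate $\delta\partial_ta$ once more via \eqref{eq:X-algebraic-identity}, writing $\delta\partial_ta=-X-a$. Linearity of $B_u$ then gives
\[
-\ip{B_u(\delta\partial_ta)}{\nabla a}=\ip{B_u(X)}{\nabla a}+\ip{B_u(a)}{\nabla a}.
\]

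Finally, I would collect terms. Moving $\delta\frac{\dd}{\dd t}\ip{B_u(a)}{\nabla a}$ to the left produces exactly the functional $\mathcal K$ of \eqref{eq:compensated-functional}, including the sign of its last term. The right-hand side becomes $2\ip{B_u(a)}{\nabla a}+\ip{B_u(X)}{\nabla a}-\delta\ip{B_{\partial_tu}(a)}{\nabla a}+\delta\ip{\partial_{tt}u}{X}-\sqrt{\eps}\ip{\partial_tp}{Y}$, which is \eqref{eq:exact-compensated-identity}. The only real obstacle is bookkeeping of signs in the double use of \eqref{eq:X-algebraic-identity}. I would check this by confirming that the coefficient $2$ arises from one copy of $\ip{B_u(a)}{\nabla a}$ coming from $-\nabla a$ inside $\nabla X$ and another from $-a$ inside $\delta\partial_ta$.
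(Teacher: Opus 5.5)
Your proposal is correct and follows the paper's proof essentially step for step. You test \eqref{eq:XY-system} by $(X,Y)$ so that the $\eps^{-1/2}$ acoustic terms cancel, and you rewrite $\ip{\Delta a}{X}$ via $X=-a-\delta\partial_ta$. You then handle $\ip{\Div B_u(a)}{X}$ by integrating by parts in space, then in time, and substituting $\delta\partial_ta=-X-a$ a second time; this produces the factor $2$, the term $\ip{B_u(X)}{\nabla a}$, and the $-\delta\ip{B_u(a)}{\nabla a}$ correction in $\mathcal K$, exactly as in the paper.
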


\begin{proof}
Test the two equations in \eqref{eq:XY-system} by $X$ and $Y$.
The singular acoustic terms cancel exactly:
\[
    -\frac1{\sqrt{\eps}}\ip{\nabla Y}{X}
    -\frac1{\sqrt{\eps}}\ip{\Div X}{Y}=0.
\]
We are left with
\begin{equation}
\label{eq:XY-first-energy}
\begin{aligned}
 \frac12\frac{\dd}{\dd t}
   \bigl(\norm{X}{L^2}^2+\norm{Y}{L^2}^2\bigr)
 +\ip{\Delta a}{X}
 ={}&\ip{\Div B_u(a)}{X}\\
 &+\delta\ip{\partial_{tt}u}{X}
  -\sqrt{\eps}\ip{\partial_tp}{Y}.
\end{aligned}
\end{equation}
The algebraic identity \eqref{eq:X-algebraic-identity} gives
\begin{equation}
\label{eq:laplacian-compensation}
    \ip{\Delta a}{X}
    =\norm{\nabla a}{L^2}^2
     +\frac{\delta}{2}\frac{\dd}{\dd t}
       \norm{\nabla a}{L^2}^2.
\end{equation}
It remains to treat the convection term without differentiating $X$
in space.  Again using $X=-a-\delta\partial_ta$ and periodic
integration by parts,
\begin{equation}
\label{eq:convection-first-rewrite}
\begin{aligned}
    \ip{\Div B_u(a)}{X}
    &=\ip{B_u(a)}{\nabla a}
      +\delta\ip{B_u(a)}{\nabla\partial_ta}\\
    &=\delta\frac{\dd}{\dd t}
       \ip{B_u(a)}{\nabla a}
      +\ip{B_u(a)}{\nabla a}
      -\delta\ip{\partial_tB_u(a)}{\nabla a}.
\end{aligned}
\end{equation}
Since
\[
    \partial_tB_u(a)=B_{\partial_tu}(a)+B_u(\partial_ta)
\]
and $\delta\partial_ta=-a-X$, one has
\[
    -\delta B_u(\partial_ta)=B_u(a)+B_u(X).
\]
Substituting this relation into
\eqref{eq:convection-first-rewrite} yields
\begin{equation}
\label{eq:convection-exact-rewrite}
\begin{aligned}
    \ip{\Div B_u(a)}{X}
    ={}&\delta\frac{\dd}{\dd t}
       \ip{B_u(a)}{\nabla a}
       +2\ip{B_u(a)}{\nabla a}\\
      &+\ip{B_u(X)}{\nabla a}
       -\delta\ip{B_{\partial_tu}(a)}{\nabla a}.
\end{aligned}
\end{equation}
Combining \eqref{eq:XY-first-energy},
\eqref{eq:laplacian-compensation}, and
\eqref{eq:convection-exact-rewrite}, and moving the total time
derivative in the first line of
\eqref{eq:convection-exact-rewrite} to the left-hand side, proves
\eqref{eq:exact-compensated-identity}.
\end{proof}

\begin{proposition}[Compensated corrector estimate]
\label{prop:compensated-corrector}
Under the assumptions of Theorem~\ref{thm:main-filtered-pressure},
\begin{equation}
\label{eq:compensated-corrector-estimate}
\begin{aligned}
 &\sup_{0\leq t\leq T}
 \left[
 \begin{aligned}
   &\eps\norm{\pi(t)}{L^2}^2+\norm{a(t)}{L^2}^2
     +\delta\norm{S(t)}{L^2}^2\\
   &\quad+\norm{X(t)}{L^2}^2+\norm{Y(t)}{L^2}^2
     +\delta\norm{\nabla a(t)}{L^2}^2
 \end{aligned}
 \right]\\
 &\qquad
 +\int_0^T
 \bigl(\norm{S(t)}{L^2}^2+\norm{\nabla a(t)}{L^2}^2\bigr)\,\dd t
 \leq C_T\delta.
\end{aligned}
\end{equation}
In particular,
\begin{equation}
\label{eq:corrector-uniform-H1}
    \sup_{0\leq t\leq T}\norm{a(t)}{H^1}\leq C_T.
\end{equation}
\end{proposition}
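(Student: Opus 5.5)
The plan is to integrate the exact compensated identity of Lemma~\ref{lem:exact-compensated-identity} in time and combine it with Corollary~\ref{cor:corrector-basic-bound}. The first three terms of the left-hand side of \eqref{eq:compensated-corrector-estimate} and $\int_0^T\norm{S}{L^2}^2$ are already bounded by $C_T\delta$ by that corollary. The task is therefore to control $\mathcal K$ and $\int_0^T\norm{\nabla a}{L^2}^2\,\dd t$.

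\textbf{Step 1: $\mathcal K$ is coercive.} By \eqref{eq:Bu-basic-estimates} and Young's inequality,
\[
\delta|\ip{B_u(a)}{\nabla a}|\leq\frac{\delta}{4}\norm{\nabla a}{L^2}^2+C_T\delta\norm{a}{L^2}^2 .
\]
Hence
\[
\mathcal K\geq\frac12\norm{X}{L^2}^2+\frac12\norm{Y}{L^2}^2+\frac{\delta}{4}\norm{\nabla a}{L^2}^2-C_T\delta\norm{a}{L^2}^2 .
\]
The last term is at most $C_T\delta^2$ by Corollary~\ref{cor:corrector-basic-bound}. The same estimate gives $\mathcal K(0)\leq C\delta$, using Lemma~\ref{lem:initial-corrector-bounds}.

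\textbf{Step 2: the right-hand side of \eqref{eq:exact-compensated-identity}.} Each term is absorbed into $\frac12\norm{\nabla a}{L^2}^2$ plus lower-order quantities:
\begin{itemize}
\item $2|\ip{B_u(a)}{\nabla a}|\leq\frac18\norm{\nabla a}{L^2}^2+C_T\norm{a}{L^2}^2$.
\item $|\ip{B_u(X)}{\nabla a}|\leq\frac18\norm{\nabla a}{L^2}^2+C_T\norm{X}{L^2}^2$.
\item $\delta|\ip{B_{\partial_tu}(a)}{\nabla a}|\leq\frac18\norm{\nabla a}{L^2}^2+C_T\delta^2\norm{a}{L^2}^2$.
\item $\delta|\ip{\partial_{tt}u}{X}|\leq C_T\delta^2+\norm{X}{L^2}^2$.
\item $\sqrt{\eps}|\ip{\partial_tp}{Y}|\leq C_T\eps+\norm{Y}{L^2}^2$.
\end{itemize}
Together these give
\[
\frac{\dd}{\dd t}\mathcal K+\frac12\norm{\nabla a}{L^2}^2\leq C_T\bigl(\norm{X}{L^2}^2+\norm{Y}{L^2}^2\bigr)+C_T\norm{a}{L^2}^2+C_T(\eps+\delta^2).
\]

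\textbf{Step 3: Gronwall.} Define $\widetilde{\mathcal K}:=\mathcal K+C_T'\delta\norm{a}{L^2}^2$, or simply work with $\mathcal K+C\delta$, which is nonnegative and comparable to $\norm{X}{L^2}^2+\norm{Y}{L^2}^2+\delta\norm{\nabla a}{L^2}^2$ up to $O(\delta^2)$. Then $\norm{X}{L^2}^2+\norm{Y}{L^2}^2\leq 2\mathcal K+C_T\delta^2$. We have $\int_0^T\norm{a}{L^2}^2\leq C_T\delta$ from Corollary~\ref{cor:corrector-basic-bound}, and $\eps\leq\mu_*\delta$. Gronwall's inequality then yields
\[
\sup_{0\leq t\leq T}\mathcal K+\frac12\int_0^T\norm{\nabla a}{L^2}^2\,\dd t\leq C_T\delta .
\]
By Step 1 this gives \eqref{eq:compensated-corrector-estimate}.

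\textbf{Step 4: the $H^1$ bound.} From $\delta\norm{\nabla a}{L^2}^2\leq C_T\delta$ we get $\norm{\nabla a}{L^2}\leq C_T$. Combined with $\norm{a}{L^2}^2\leq C_T\delta$, this gives \eqref{eq:corrector-uniform-H1}.

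\textbf{Main obstacle.} The delicate point is the term $\ip{B_u(X)}{\nabla a}$. It carries no factor of $\delta$, so it must be absorbed against the dissipation $\norm{\nabla a}{L^2}^2$ rather than against $\delta\norm{\nabla a}{L^2}^2$. Young's inequality handles this only because the compensated identity supplies a full unweighted $\norm{\nabla a}{L^2}^2$ and $\norm{X}{L^2}^2$ is part of the Gronwall functional. One also has to make sure that all constants absorbed into $\frac12\norm{\nabla a}{L^2}^2$ sum to less than one.
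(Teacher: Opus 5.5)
Your proof is correct and follows essentially the paper's argument: the same Young-inequality coercivity bound for $\mathcal K$, the same term-by-term absorption of the right-hand side of \eqref{eq:exact-compensated-identity} into $\norm{\nabla a}{L^2}^2$, and Gronwall. The only difference is bookkeeping. The paper closes Gronwall on the combined functional $\mathcal H=\mathcal K+\Lambda\mathcal D$, whereas you first use Corollary~\ref{cor:corrector-basic-bound} to treat $\norm{a}{L^2}^2$ as a known $O(\delta)$ source and the indefinite part of $\mathcal K$ as an $O(\delta^2)$ constant shift; this works equally well.
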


\begin{proof}
We first establish coercivity.  Smoothness of $u$ and Young's
inequality imply
\[
    \delta|\ip{B_u(a)}{\nabla a}|
    \leq\frac{\delta}{4}\norm{\nabla a}{L^2}^2
        +C_T\delta\norm{a}{L^2}^2.
\]
It follows that, for a sufficiently large fixed $\Lambda>0$, the
combined functional
\begin{equation}
\label{eq:total-compensated-energy}
    \mathcal H:=\mathcal K+\Lambda\mathcal D
\end{equation}
is equivalent, uniformly for $0<\delta\leq1$, to
\begin{equation}
\label{eq:total-compensated-equivalence}
\begin{aligned}
 &\norm{X}{L^2}^2+\norm{Y}{L^2}^2
 +\delta\norm{\nabla a}{L^2}^2\\
 &\qquad
 +\eps\norm{\pi}{L^2}^2+\norm{a}{L^2}^2
 +\delta\norm{S}{L^2}^2.
\end{aligned}
\end{equation}
More precisely, there are $c_T,C_T>0$, independent of the small
parameters, such that $c_T\mathcal Q\leq\mathcal H\leq C_T\mathcal Q$,
where $\mathcal Q$ denotes the expression in
\eqref{eq:total-compensated-equivalence}.

We next estimate the right-hand side of
\eqref{eq:exact-compensated-identity}.  Since $u$ and $\partial_tu$
are smooth,
\begin{align*}
 2|\ip{B_u(a)}{\nabla a}|
    &\leq\frac18\norm{\nabla a}{L^2}^2+C_T\norm{a}{L^2}^2,\\
 |\ip{B_u(X)}{\nabla a}|
    &\leq\frac18\norm{\nabla a}{L^2}^2+C_T\norm{X}{L^2}^2,\\
 \delta|\ip{B_{\partial_tu}(a)}{\nabla a}|
    &\leq\frac18\norm{\nabla a}{L^2}^2+C_T\delta^2\norm{a}{L^2}^2.
\end{align*}
The smooth forcing terms satisfy
\[
\begin{aligned}
 \delta|\ip{\partial_{tt}u}{X}|
    &\leq C_T\norm{X}{L^2}^2+C_T\delta^2,\\
 \sqrt{\eps}|\ip{\partial_tp}{Y}|
    &\leq C_T\norm{Y}{L^2}^2+C_T\eps.
\end{aligned}
\]
Consequently,
\begin{equation}
\label{eq:K-differential-inequality}
    \frac{\dd}{\dd t}\mathcal K
    +\frac12\norm{\nabla a}{L^2}^2
    \leq C_T\bigl(
       \norm{a}{L^2}^2+\norm{X}{L^2}^2+\norm{Y}{L^2}^2
    \bigr)+C_T(\eps+\delta^2).
\end{equation}
Choose $\Lambda$ in \eqref{eq:total-compensated-energy} large enough
for coercivity and add $\Lambda$ times
\eqref{eq:corrector-basic-differential} to
\eqref{eq:K-differential-inequality}.  Using
\eqref{eq:total-compensated-equivalence} gives
\begin{equation}
\label{eq:H-differential-inequality}
    \frac{\dd}{\dd t}\mathcal H
    +c\norm{\nabla a}{L^2}^2+c\norm{S}{L^2}^2
    \leq C_T\mathcal H+C_T(\eps+\delta^2)
\end{equation}
for some $c>0$ independent of $\eps,\delta$.

Lemma~\ref{lem:initial-corrector-bounds} and the coercivity argument
give $\mathcal H(0)\leq C\delta$.  Gronwall's inequality, together with
$\eps\leq\mu_*\delta$, yields
\[
    \sup_{0\leq t\leq T}\mathcal H(t)
    +\int_0^T
      \bigl(\norm{\nabla a}{L^2}^2+\norm{S}{L^2}^2\bigr)\,\dd t
    \leq C_T\delta.
\]
The equivalence \eqref{eq:total-compensated-equivalence} proves
\eqref{eq:compensated-corrector-estimate}.  Finally,
$\norm{a}{L^2}^2\leq C_T\delta$ and
$\delta\norm{\nabla a}{L^2}^2\leq C_T\delta$ imply
\eqref{eq:corrector-uniform-H1}.
\end{proof}

\begin{corollary}[Quadratic corrector bound]
\label{cor:quadratic-corrector}
Under the same assumptions,
\begin{equation}
\label{eq:quadratic-corrector-bound}
    \int_0^T\norm{a\otimes a}{L^2}^2\,\dd t
    =\int_0^T\norm{a}{L^4}^4\,\dd t
    \leq C_T\delta^2.
\end{equation}
\end{corollary}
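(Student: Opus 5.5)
The bound follows from the Ladyzhenskaya inequality combined with the integrated gradient estimate of Proposition~\ref{prop:compensated-corrector}.

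First, pointwise $|a\otimes a|^2=\sum_{i,j}a_i^2a_j^2=|a|^4$. This gives the stated identity $\norm{a\otimes a}{L^2}^2=\norm{a}{L^4}^4$; alternatively one may use \eqref{eq:tensor-product-L2}.

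Next, apply \eqref{eq:Ladyzhenskaya-fourth-power} at each fixed time $t\in[0,T]$:
\[
    \norm{a(t)}{L^4}^4
    \leq C\norm{a(t)}{L^2}^2
      \bigl(\norm{a(t)}{L^2}^2+\norm{\nabla a(t)}{L^2}^2\bigr).
\]
The key point is to put the supremum on the $L^2$ factor and integrate the gradient factor in time. The gradient factor must not be bounded by its supremum: from $\delta\norm{\nabla a}{L^2}^2\leq C_T\delta$ we only get $\norm{\nabla a}{L^2}^2\leq C_T$, which would lose a factor of $\delta$.

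Integrating over $[0,T]$ gives
\[
    \int_0^T\norm{a}{L^4}^4\,\dd t
    \leq C\sup_{0\leq t\leq T}\norm{a(t)}{L^2}^2
      \Bigl(T\sup_{0\leq t\leq T}\norm{a(t)}{L^2}^2
        +\int_0^T\norm{\nabla a(t)}{L^2}^2\,\dd t\Bigr).
\]
By \eqref{eq:compensated-corrector-estimate}, both $\sup_t\norm{a}{L^2}^2$ and $\int_0^T\norm{\nabla a}{L^2}^2\,\dd t$ are at most $C_T\delta$. The right-hand side is therefore at most $C_T\delta\,(C_T\delta+C_T\delta)\leq C_T\delta^2$, which is \eqref{eq:quadratic-corrector-bound}.

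All the difficulty was handled in Proposition~\ref{prop:compensated-corrector}, which supplies the time-integrated gradient bound of size $O(\delta)$. Given that result, this corollary needs no further obstacle.
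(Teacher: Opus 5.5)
Your proof is correct and follows the paper's argument exactly. You apply \eqref{eq:Ladyzhenskaya-fourth-power} at each time, bound the $L^2$ factor by its supremum, integrate $\norm{a}{L^2}^2+\norm{\nabla a}{L^2}^2$ in time, and take the $O(\delta)$ bounds for both from Proposition~\ref{prop:compensated-corrector}; your pointwise check $|a\otimes a|^2=|a|^4$ also justifies the stated equality, which the paper uses without comment.
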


\begin{proof}
By \eqref{eq:Ladyzhenskaya-fourth-power},
\[
    \norm{a}{L^4}^4
    \leq C\norm{a}{L^2}^2
      \bigl(\norm{a}{L^2}^2+\norm{\nabla a}{L^2}^2\bigr).
\]
Proposition~\ref{prop:compensated-corrector} gives
$\sup_t\norm{a(t)}{L^2}^2\leq C_T\delta$ and
\[
    \int_0^T
      \bigl(\norm{a}{L^2}^2+\norm{\nabla a}{L^2}^2\bigr)\,\dd t
    \leq C_T\delta.
\]
Multiplication of the last two bounds proves
\eqref{eq:quadratic-corrector-bound}.
\end{proof}

\section{Filtered remainder and pressure recovery}
\label{sec:filtered-pressure}

\subsection{The exact remainder equations}

Recall the remainder $(r,w,Z)$ from
\eqref{eq:main-remainder-definition}.  Since the corrector initial data
are the complete initial discrepancies,
\begin{equation}
\label{eq:remainder-zero-data}
    r(0)=0,\qquad w(0)=0,\qquad Z(0)=0.
\end{equation}

\begin{lemma}[Exact filtered system]
\label{lem:exact-filtered-system}
The filtered remainder satisfies
\begin{equation}
\label{eq:filtered-remainder-system}
\left\{
\begin{aligned}
    \eps\partial_tr+\Div w&=0,\\
    \partial_tw+\Div Z+\nabla r&=0,\\
    \delta\partial_tZ+\nabla w
       &=B_u(w)+a\otimes a+a\otimes w+w\otimes a
         +w\otimes w-Z.
\end{aligned}
\right.
\end{equation}
\end{lemma}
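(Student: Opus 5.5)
The plan is a direct subtraction: the unfiltered error $(\theta,v,R)$ defined in \eqref{eq:unfiltered-error-definition} satisfies the exact system \eqref{eq:unfiltered-error-system}, and the corrector $(\pi,a,S)$ satisfies \eqref{eq:corrector-introduction}. By \eqref{eq:main-remainder-definition} we have $r=\theta-\pi$, $w=v-a$, $Z=R-S$. So I subtract \eqref{eq:corrector-introduction} from \eqref{eq:unfiltered-error-system} equation by equation, using linearity of all differential operators.

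The first two equations are immediate. In the pressure equation the forcing $-\eps\partial_tp$ appears in both systems, so it cancels and leaves $\eps\partial_tr+\Div w=0$. The momentum equations carry no source, so subtracting them gives $\partial_tw+\Div Z+\nabla r=0$.

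In the stress equation the forcing $-\delta\partial_tU$ is common to both systems and cancels. The damping terms give $-R+S=-Z$. The remaining terms on the right are
\[
B_u(v)+v\otimes v-B_u(a).
\]
Since $B_u$ is linear, $B_u(v)-B_u(a)=B_u(w)$. Substituting $v=a+w$ and expanding bilinearly gives
\[
v\otimes v=a\otimes a+a\otimes w+w\otimes a+w\otimes w.
\]
Together these yield the third equation of \eqref{eq:filtered-remainder-system}.

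The only point needing care is the bookkeeping of the quadratic term. The corrector equation is linearized and contains only $B_u(a)$, not $a\otimes a$. For that reason the pure corrector self-interaction $a\otimes a$ stays in the remainder equation as an $r,w,Z$-independent source. This matches the strategy described in the introduction, where $a\otimes a$ is controlled by Corollary~\ref{cor:quadratic-corrector}.

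Finally, the zero data \eqref{eq:remainder-zero-data} follow from the choice \eqref{eq:main-corrector-data}, though they are not needed for the identity itself.
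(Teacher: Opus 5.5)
Your proposal is correct and is essentially the paper's proof: subtract the corrector system from the exact unfiltered error system, note that the common forcings $-\eps\partial_tp$ and $-\delta\partial_tU$ cancel, and expand $v\otimes v$ with $v=a+w$ (using linearity of $B_u$ and $R-S=Z$) to obtain the third equation. Your remark that the zero initial data are not needed for the identity itself is also accurate.
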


\begin{proof}
The first two equations follow by subtracting the first two equations
of \eqref{eq:corrector-introduction} from
\eqref{eq:unfiltered-error-system}.  For the third one, use $v=a+w$
and $R=S+Z$.  Then
\[
\begin{aligned}
 B_u(v)+v\otimes v-R-\delta\partial_tU
  -\bigl(B_u(a)-S-\delta\partial_tU\bigr)
 ={}&B_u(w)+a\otimes a+a\otimes w\\
    &+w\otimes a+w\otimes w-Z.
\end{aligned}
\]
This is exactly the third equation in
\eqref{eq:filtered-remainder-system}.
\end{proof}

\subsection{A uniform first-order bound}

\begin{lemma}
\label{lem:uniform-H1-remainder}
Under the assumptions of Theorem~\ref{thm:main-filtered-pressure},
\begin{equation}
\label{eq:uniform-H1-remainder}
    \sup_{0\leq t\leq T}\norm{w(t)}{H^1}\leq C_T.
\end{equation}
\end{lemma}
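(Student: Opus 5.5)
This bound needs no new energy estimate. It follows from the triangle inequality once we recall that $w$ is the difference of two quantities already controlled in $H^1$. By \eqref{eq:unfiltered-error-definition} and \eqref{eq:main-remainder-definition} we have $v=a+w$, so
\[
    w=v-a=u^{\eps,\delta}-u-a.
\]
Hence for every $t\in[0,T]$,
\[
    \norm{w(t)}{H^1}\leq\norm{v(t)}{H^1}+\norm{a(t)}{H^1}.
\]

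The two terms are handled by earlier results.

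1. The first term is bounded uniformly by Corollary~\ref{cor:baseline-uniform-H1}, i.e.\ \eqref{eq:baseline-uniform-H1}. That corollary rests on the baseline curl and divergence bounds \eqref{eq:baseline-curl-div}, on the Hodge identity \eqref{eq:Hodge-identity}, and on $\eps\leq\mu_*\delta$.

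2. The second term is bounded uniformly by \eqref{eq:corrector-uniform-H1} in Proposition~\ref{prop:compensated-corrector}. That bound comes from $\norm{a}{L^2}^2\leq C_T\delta$ and $\delta\norm{\nabla a}{L^2}^2\leq C_T\delta$.

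Both bounds hold under exactly the hypotheses of Theorem~\ref{thm:main-filtered-pressure}: the parameter restriction $\eps\leq\mu_*\delta$ with small parameters, and the initial assumptions \eqref{eq:main-initial-velocity-L2}--\eqref{eq:main-initial-stress}. Taking the supremum over $t\in[0,T]$ and adding the two constants gives \eqref{eq:uniform-H1-remainder}.

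The only point needing care is the gradient of the corrector. The naive corrector energy $\mathcal D$ gives only $L^2$ control of $a$ and no gradient control. The estimate $\delta\norm{\nabla a}{L^2}^2\leq C_T\delta$, i.e.\ $\norm{\nabla a}{L^2}^2\leq C_T$, is available only because the compensated functional $\mathcal K$ carries the term $\frac{\delta}{2}\norm{\nabla a}{L^2}^2$, and that bound has already been established. So no new estimate is required, and the lemma is a bookkeeping step.

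Its purpose downstream is to supply \eqref{eq:Ladyzhenskaya-uniform-H1} for $w$, namely $\norm{w}{L^4}^4\leq C_T\norm{w}{L^2}^2$. Together with the uniform $H^1$ bound on $a$, this lets the nonlinear terms $a\otimes w$, $w\otimes a$ and $w\otimes w$ in \eqref{eq:filtered-remainder-system} be absorbed linearly in the remainder energy estimate.
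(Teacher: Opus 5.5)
Your argument is correct and is exactly the paper's proof. You write $w=(u^{\eps,\delta}-u)-a$, bound the first term uniformly in $H^1$ by Corollary~\ref{cor:baseline-uniform-H1} and the second by \eqref{eq:corrector-uniform-H1} from Proposition~\ref{prop:compensated-corrector}, and conclude with the triangle inequality.
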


\begin{proof}
By Corollary~\ref{cor:baseline-uniform-H1},
$\sup_t\norm{u^{\eps,\delta}(t)-u(t)}{H^1}\leq C_T$.
Proposition~\ref{prop:compensated-corrector} gives
$\sup_t\norm{a(t)}{H^1}\leq C_T$.  Since
$w=u^{\eps,\delta}-u-a$, the triangle inequality proves
\eqref{eq:uniform-H1-remainder}.
\end{proof}

\subsection{Nonlinear energy estimate}

Define
\begin{equation}
\label{eq:remainder-energy}
    \mathcal R(t)
    :=\frac{\eps}{2}\norm{r(t)}{L^2}^2
      +\frac12\norm{w(t)}{L^2}^2
      +\frac{\delta}{2}\norm{Z(t)}{L^2}^2.
\end{equation}

\begin{lemma}[Remainder energy identity]
\label{lem:remainder-energy-identity}
Let
\begin{equation}
\label{eq:remainder-nonlinearity}
    \mathcal N
    :=B_u(w)+a\otimes a+a\otimes w+w\otimes a+w\otimes w.
\end{equation}
Then
\begin{equation}
\label{eq:remainder-energy-identity}
    \frac{\dd}{\dd t}\mathcal R+\norm{Z}{L^2}^2
    =\ip{\mathcal N}{Z}.
\end{equation}
\end{lemma}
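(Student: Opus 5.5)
The identity \eqref{eq:remainder-energy-identity} follows from a direct energy computation on the exact filtered system \eqref{eq:filtered-remainder-system} of Lemma~\ref{lem:exact-filtered-system}, carried out exactly as in the proof of Lemma~\ref{lem:corrector-basic-energy}. The only differences are that there are no smooth forcing terms, since $-\eps\partial_tp$ and $-\delta\partial_tU$ have been absorbed into the corrector, and that the source terms are collected into $\mathcal N$.

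I would take the $L^2$ pairings of the first equation with $r$, the second with $w$, and the third with $Z$, and then add the results. The time-derivative terms give
\[
\ip{\eps\partial_tr}{r}+\ip{\partial_tw}{w}+\ip{\delta\partial_tZ}{Z}=\frac{\dd}{\dd t}\mathcal R.
\]
This uses only the fact that $\eps$ and $\delta$ are constant and that the solution is smooth on $[0,T]$. Smoothness holds because the relaxation solution is smooth by Proposition~\ref{prop:baseline-large-perturbation} and the corrector is smooth by the symmetric-hyperbolic theory.

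The first-order terms cancel in pairs by the periodic integration by parts \eqref{eq:periodic-integration-by-parts}. The pressure–velocity pair vanishes:
\[
\ip{\Div w}{r}+\ip{\nabla r}{w}=0.
\]
The velocity–stress pair also vanishes:
\[
\ip{\Div Z}{w}+\ip{\nabla w}{Z}=0.
\]
Here the convention $(\nabla w)_{ij}=\partial_jw_i$ and the row-wise divergence are exactly those for which $\ip{\Div Z}{w}=-\ip{Z}{\nabla w}$ holds.

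On the right-hand side of the third equation, the damping term gives $-\ip{Z}{Z}=-\norm{Z}{L^2}^2$, which I move to the left. The remaining terms are $B_u(w)+a\otimes a+a\otimes w+w\otimes a+w\otimes w=\mathcal N$, and they contribute $\ip{\mathcal N}{Z}$. This proves the identity.

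There is no genuine obstacle in this argument. The only point needing care is the index convention in the velocity–stress cancellation, which was already verified in Lemma~\ref{lem:corrector-basic-energy}.
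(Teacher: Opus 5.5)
Your proposal is correct and follows the paper's own proof: test the three equations of the filtered system by $r$, $w$, and $Z$, cancel the two first-order pairs by periodic integration by parts, and move the damping term $\norm{Z}{L^2}^2$ to the left, so that only $\ip{\mathcal N}{Z}$ remains. Your check of the index convention in $\ip{\Div Z}{w}=-\ip{Z}{\nabla w}$ is accurate, and so is your observation that the forcing terms $-\eps\partial_tp$ and $-\delta\partial_tU$ have been absorbed into the corrector.
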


\begin{proof}
Test the equations in \eqref{eq:filtered-remainder-system} by $r$,
$w$, and $Z$.  Periodicity gives
\[
    \ip{\Div w}{r}+\ip{\nabla r}{w}=0,\qquad
    \ip{\Div Z}{w}+\ip{\nabla w}{Z}=0.
\]
All singular first-order terms therefore cancel, while the damping
term contributes $\norm{Z}{L^2}^2$.  This proves
\eqref{eq:remainder-energy-identity}.
\end{proof}

\begin{lemma}[Quadratic nonlinear bound]
\label{lem:remainder-nonlinear-bound}
There is a constant $C_T$, independent of the small parameters, such
that
\begin{equation}
\label{eq:remainder-nonlinear-bound}
    \norm{\mathcal N}{L^2}^2
    \leq C_T\norm{w}{L^2}^2
       +C\norm{a}{L^4}^4.
\end{equation}
\end{lemma}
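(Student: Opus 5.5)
We bound the five pieces of $\mathcal N$ in \eqref{eq:remainder-nonlinearity} separately and then use $(x_1+\dots+x_5)^2\leq5(x_1^2+\dots+x_5^2)$.

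The two harmless pieces come first. By \eqref{eq:Bu-basic-estimates}, $\norm{B_u(w)}{L^2}^2\leq C_T\norm{w}{L^2}^2$, because $u$ is smooth on $[0,T]\times\T^2$. By \eqref{eq:tensor-product-L2}, $\norm{a\otimes a}{L^2}^2\leq\norm{a}{L^4}^4$, which is already of the admissible form.

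The two mixed terms are handled by \eqref{eq:tensor-product-L2} and Young's inequality:
\[
\norm{a\otimes w}{L^2}^2+\norm{w\otimes a}{L^2}^2
\leq2\norm{a}{L^4}^2\norm{w}{L^4}^2
\leq\norm{a}{L^4}^4+\norm{w}{L^4}^4 .
\]
This reduces everything to bounding $\norm{w}{L^4}^4$.

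The cubic/quartic term $w\otimes w$ also satisfies $\norm{w\otimes w}{L^2}^2\leq\norm{w}{L^4}^4$. Lemma~\ref{lem:uniform-H1-remainder} gives $\sup_t\norm{w(t)}{H^1}\leq C_T$. Hence the uniform-$H^1$ form of Ladyzhenskaya's inequality, \eqref{eq:Ladyzhenskaya-uniform-H1} with $M=C_T$, yields $\norm{w}{L^4}^4\leq C_T\norm{w}{L^2}^2$. This one step turns the nonlinear self-interaction into a term linear in the energy.

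Collecting the estimates gives \eqref{eq:remainder-nonlinear-bound}, where the constant in front of $\norm{a}{L^4}^4$ is absolute (it equals $10$, say). The main point is this last step. The $L^4$ norm of $w$ must be controlled by its $L^2$ norm alone, with no factor of $\norm{\nabla w}{L^2}$. Otherwise the subsequent energy estimate could not close, since only $Z$ is dissipated there. The uniform $H^1$ bound on $w$, inherited from the baseline theory and the corrector estimate \eqref{eq:corrector-uniform-H1}, is exactly what provides this. No such bound is needed for $a$, because its quartic term is integrated in time using Corollary~\ref{cor:quadratic-corrector}.
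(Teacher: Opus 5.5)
Your proposal is correct and follows the paper's proof step for step: $B_u(w)$ via smoothness of $u$, $\norm{a\otimes a}{L^2}^2=\norm{a}{L^4}^4$, the mixed terms via \eqref{eq:tensor-product-L2} and $AB\leq(A^2+B^2)/2$, and $\norm{w}{L^4}^4\leq C_T\norm{w}{L^2}^2$ from Lemma~\ref{lem:uniform-H1-remainder} combined with \eqref{eq:Ladyzhenskaya-uniform-H1}. Your explicit bookkeeping of the absolute constant (for instance $10$) in front of $\norm{a}{L^4}^4$ is a harmless refinement of the paper's ``squaring the sum'' step.
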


\begin{proof}
The smoothness of $u$ gives
$\norm{B_u(w)}{L^2}^2\leq C_T\norm{w}{L^2}^2$.
Furthermore,
\[
    \norm{a\otimes a}{L^2}^2=\norm{a}{L^4}^4,\qquad
    \norm{w\otimes w}{L^2}^2=\norm{w}{L^4}^4.
\]
By Lemma~\ref{lem:uniform-H1-remainder} and
\eqref{eq:Ladyzhenskaya-uniform-H1},
\begin{equation}
\label{eq:w-fourth-order-bound}
    \norm{w}{L^4}^4\leq C_T\norm{w}{L^2}^2.
\end{equation}
For the mixed products, \eqref{eq:tensor-product-L2} and the elementary
inequality $AB\leq(A^2+B^2)/2$ yield
\[
\begin{aligned}
    \norm{a\otimes w}{L^2}^2
    &\leq\norm{a}{L^4}^2\norm{w}{L^4}^2\\
    &\leq\frac12\norm{a}{L^4}^4
          +\frac12\norm{w}{L^4}^4\\
    &\leq\frac12\norm{a}{L^4}^4
          +C_T\norm{w}{L^2}^2.
\end{aligned}
\]
The same estimate holds for $w\otimes a$.  Squaring the sum in
\eqref{eq:remainder-nonlinearity} and combining the preceding bounds
proves \eqref{eq:remainder-nonlinear-bound}.
\end{proof}

\begin{proposition}[Filtered remainder estimate]
\label{prop:filtered-remainder}
Under the assumptions of Theorem~\ref{thm:main-filtered-pressure},
\begin{equation}
\label{eq:filtered-remainder-estimate}
    \sup_{0\leq t\leq T}\mathcal R(t)
    +\int_0^T\norm{Z(t)}{L^2}^2\,\dd t
    \leq C_T\delta^2.
\end{equation}
\end{proposition}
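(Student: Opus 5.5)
The plan is to close a Gronwall argument for $\mathcal R$ starting from the exact identity in Lemma~\ref{lem:remainder-energy-identity}, with the zero initial data \eqref{eq:remainder-zero-data}. No continuation or bootstrap is needed: the cubic term $w\otimes w$ has already been linearized in Lemma~\ref{lem:remainder-nonlinear-bound} through the uniform $H^1$ bound of Lemma~\ref{lem:uniform-H1-remainder}.

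First, apply Young's inequality to the right-hand side of \eqref{eq:remainder-energy-identity}:
\[
    |\ip{\mathcal N}{Z}|\leq\frac12\norm{Z}{L^2}^2+\frac12\norm{\mathcal N}{L^2}^2 .
\]
Absorbing the $Z$ term into the damping gives
\[
    \frac{\dd}{\dd t}\mathcal R+\frac12\norm{Z}{L^2}^2
    \leq\frac12\norm{\mathcal N}{L^2}^2 .
\]
Next, insert \eqref{eq:remainder-nonlinear-bound} and use $\norm{w}{L^2}^2\leq2\mathcal R$. This yields
\[
    \frac{\dd}{\dd t}\mathcal R+\frac12\norm{Z}{L^2}^2
    \leq C_T\mathcal R+C\norm{a}{L^4}^4 .
\]
The inhomogeneous forcing $\norm{a}{L^4}^4$ is not pointwise small in time. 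It is only integrable, with $\int_0^T\norm{a}{L^4}^4\,\dd t\leq C_T\delta^2$ by Corollary~\ref{cor:quadratic-corrector}. The integral form of Gronwall's inequality is exactly suited to this.

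Multiplying by $e^{-C_Tt}$ and integrating from $0$, with $\mathcal R(0)=0$, gives
\[
    \sup_{0\le t\le T}\mathcal R(t)\leq e^{C_TT}\,C\int_0^T\norm{a}{L^4}^4\,\dd t\leq C_T\delta^2 .
\]
Integrating the differential inequality once more over $[0,T]$ then bounds $\frac12\int_0^T\norm{Z}{L^2}^2\,\dd t$. That integral is at most
\[
    \mathcal R(0)+C_T\int_0^T\mathcal R\,\dd t+C\int_0^T\norm{a}{L^4}^4\,\dd t\leq C_T\delta^2 .
\]
This proves \eqref{eq:filtered-remainder-estimate}.

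The only step needing care is that every constant is independent of $\eps$ and $\delta$. This holds because the constants enter only through three sources. The first is the smoothness of $u$ in $B_u$. The second is the uniform bound $\sup_t\norm{w}{H^1}\leq C_T$, which in turn rests on Proposition~\ref{prop:baseline-large-perturbation} and \eqref{eq:corrector-uniform-H1}. The third is the Ladyzhenskaya constant. All three are parameter-free under $\eps\leq\mu_*\delta$.

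The genuine difficulty, namely $O(\delta^2)$ control of the source $a\otimes a$, was already resolved by the compensated gradient dissipation of Proposition~\ref{prop:compensated-corrector}. So I expect no real obstacle here beyond checking that the existence interval $[0,T]$ from the baseline theory is used, so that the solution, and hence $\mathcal R$, is defined and smooth on all of $[0,T]$.
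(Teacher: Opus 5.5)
Your proposal is correct and follows the paper's proof essentially step for step. The steps are: Young's inequality to absorb $\frac12\norm{Z}{L^2}^2$; Lemma~\ref{lem:remainder-nonlinear-bound} with $\norm{w}{L^2}^2\leq2\mathcal R$; Gronwall from $\mathcal R(0)=0$ combined with Corollary~\ref{cor:quadratic-corrector}; and a second integration to bound $\int_0^T\norm{Z}{L^2}^2\,\dd t$. Your remark that no bootstrap is needed is also accurate, since the uniform $H^1$ bound on $w$ comes from the baseline theory and the corrector estimate, independently of $\mathcal R$.
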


\begin{proof}
By Cauchy--Young's inequality,
\[
    |\ip{\mathcal N}{Z}|
    \leq\frac12\norm{Z}{L^2}^2
        +\frac12\norm{\mathcal N}{L^2}^2.
\]
Combining this with
Lemmas~\ref{lem:remainder-energy-identity} and
\ref{lem:remainder-nonlinear-bound}, and using
$\norm{w}{L^2}^2\leq2\mathcal R$, gives
\begin{equation}
\label{eq:remainder-gronwall-inequality}
    \frac{\dd}{\dd t}\mathcal R
    +\frac12\norm{Z}{L^2}^2
    \leq C_T\mathcal R+C\norm{a}{L^4}^4.
\end{equation}
Because of the zero initial data \eqref{eq:remainder-zero-data},
$\mathcal R(0)=0$.  Integrating
\eqref{eq:remainder-gronwall-inequality}, applying Gronwall's
inequality, and then using
Corollary~\ref{cor:quadratic-corrector}, we obtain
\[
    \sup_{0\leq t\leq T}\mathcal R(t)
    \leq C_T\int_0^T\norm{a(t)}{L^4}^4\,\dd t
    \leq C_T\delta^2.
\]
Integrating \eqref{eq:remainder-gronwall-inequality} once more and
using the just-established supremum bound proves the estimate for
$\int_0^T\norm{Z}{L^2}^2\,\dd t$.
\end{proof}

\subsection{Proof of the main theorem}

\begin{proof}[Proof of Theorem~\ref{thm:main-filtered-pressure}]
The existence of the relaxation solution on $[0,T]$ follows from
Proposition~\ref{prop:baseline-large-perturbation}.  The corrector
estimate \eqref{eq:main-corrector-estimate} is
Proposition~\ref{prop:compensated-corrector}, and
\eqref{eq:main-quadratic-corrector} is
Corollary~\ref{cor:quadratic-corrector}.

By the definition \eqref{eq:remainder-energy},
Proposition~\ref{prop:filtered-remainder} gives
\[
\begin{aligned}
 &\sup_{0\leq t\leq T}
 \left(
   \eps\norm{r(t)}{L^2}^2+\norm{w(t)}{L^2}^2
   +\delta\norm{Z(t)}{L^2}^2
 \right)\\
 &\qquad
 +\int_0^T\norm{Z(t)}{L^2}^2\,\dd t
 \leq C_T\delta^2,
\end{aligned}
\]
which is \eqref{eq:main-remainder-estimate}.  Taking square roots
yields
\[
    \norm{r}{L^\infty(0,T;L^2)}
       \leq C_T\frac{\delta}{\sqrt{\eps}},\qquad
    \norm{w}{L^\infty(0,T;L^2)}
       \leq C_T\delta.
\]
Since $r=p^{\eps,\delta}-p-\pi$ and
$w=u^{\eps,\delta}-u-a$, these are precisely
\eqref{eq:main-filtered-rates}.  Finally,
$\delta^2/\eps\to0$ makes the first right-hand side tend to zero,
proving \eqref{eq:main-pressure-convergence}.
\end{proof}

\section{Conclusions and perspectives}
\label{sec:conclusions}

We have isolated the acoustic part of a two-parameter hyperbolic
relaxation approximation to the incompressible Navier--Stokes
equations.  The basic weighted energy cannot recover the pressure
because it controls the pressure discrepancy only after multiplication
by $\sqrt{\eps}$.  The corrector $(\pi,a,S)$ separates this fast
component while retaining both the artificial-compressibility and
stress-relaxation scales.

The central point is the exact identity
\eqref{eq:exact-compensated-identity}.  The singular acoustic coupling
between $X$ and $Y$ cancels without Fourier localization, and the
algebraic relation $X=-a-\delta\partial_ta$ converts the remaining
Laplacian term into the dissipation
$\norm{\nabla a}{L^2}^2$.  The time integration of the convection term
is essential: it avoids any need to estimate $\nabla X$.  In two
dimensions, the resulting integrated gradient bound combines with the
Ladyzhenskaya inequality to give the next-order source estimate
$\int_0^T\norm{a\otimes a}{L^2}^2\,\dd t=O(\delta^2)$.

The filtered error then starts from zero and is driven only by this
quadratic corrector interaction and terms containing the error itself.
This gives the rates
\[
    \norm{p^{\eps,\delta}-\pi-p}{L^\infty L^2}
      =O\!\left(\frac{\delta}{\sqrt{\eps}}\right),
    \qquad
    \norm{u^{\eps,\delta}-a-u}{L^\infty L^2}=O(\delta).
\]
Strong filtered pressure recovery therefore holds in the window
$\delta^2\ll\eps\leq\mu_*\delta$.

Two limitations are structural.  First, the estimate does not settle
the critical scale $\eps\asymp\delta^2$, where it gives only an
order-one unweighted pressure bound.  Second, the three-dimensional
analogue would require a replacement for the two-dimensional
Ladyzhenskaya gain.  Under the present energy bounds, the standard
three-dimensional interpolation inequalities do not directly yield an
$O(\delta^2)$ estimate for the corrector self-interaction.  These two
questions appear to be natural next steps.

\section*{Declarations}

\subsection*{Funding}

The authors received no funding for this work.

\subsection*{Competing interests}

The authors declare that they have no competing interests.

\subsection*{Data availability}

No data were generated or analyzed in this study.

\bibliographystyle{amsplain}
\bibliography{references}

\end{document}